\documentclass[letterpaper,11pt]{amsart}
\usepackage{tikz}
\usepackage{wrapfig}
\usetikzlibrary{arrows}
\usepackage{float}
\usepackage{subcaption} 
\usepackage{amsmath}
\usepackage{forest}
\usepackage[font=footnotesize,labelfont=bf]{caption}
\textwidth=16.00cm 
\textheight=22.00cm 
\topmargin=0.00cm
\oddsidemargin=0.00cm 
\evensidemargin=0.00cm 
\headheight=0cm 
\headsep=0.5cm
\textheight=630pt

\usepackage{latexsym,array,delarray,amsthm,amssymb,epsfig,setspace}%eufrak

\usepackage{mathtools} % for "\DeclarePairedDelimiter" macro

% \usepackage[centredisplay,PostScript=dvips]{diagrams}
%\usepackage[dvips]{color}
%\usepackage[mathscr]{eucal}mathrsfs

% Theorem Formatting Commands.
\theoremstyle{plain}
\newtheorem{thm}{Theorem}[section]
\newtheorem{lemma}[thm]{Lemma}
\newtheorem{prop}[thm]{Proposition}
\newtheorem{cor}[thm]{Corollary}
\newtheorem{conj}[thm]{Conjecture}
\newtheorem{alg}[thm]{Algorithm}
\newtheorem*{thm*}{Theorem}
\newtheorem*{lemma*}{Lemma}
\newtheorem*{prop*}{Proposition}
\newtheorem*{cor*}{Corollary}
\newtheorem*{conj*}{Conjecture}
\newtheorem*{alg*}{Algorithm}

\theoremstyle{definition}
\newtheorem{defn}[thm]{Definition}
\newtheorem{ex}[thm]{Example}

\theoremstyle{remark}
\newtheorem*{rmk}{Remark}

% Miscellaneous Special Capitals
%Blackboard Capitals
\newcommand{\zz}{\mathbb{Z}}

\newcommand{\pp}{\mathbb{P}}

\newcommand{\rr}{\mathbb{R}}
\newcommand{\cc}{\mathbb{C}}

%Boldface Lowercase

%Caligraphics

\newcommand{\ii}{\mathcal{I}}
\newcommand{\mm}{\mathcal{M}}

%Other Commands

\newcommand{\ind}{\mbox{$\perp \kern-5.5pt \perp$}}

\newcommand{\zeroideal}{\langle 0 \rangle}
\DeclareMathOperator{\rank}{rank}

\title{Identifiability in Phylogenetics using Algebraic Matroids}
\author{Benjamin Hollering and Seth Sullivant}

\begin{document}
\maketitle

\begin{abstract}
Identifiability is a crucial property for a statistical model since distributions in the model uniquely determine the parameters that produce them. In phylogenetics, the identifiability of the tree parameter is of particular interest since it means that phylogenetic models can be used to infer evolutionary histories from data. In this paper we introduce a new computational strategy for proving the identifiability of discrete parameters in algebraic statistical models that uses algebraic matroids naturally associated to the models. We then use this algorithm to prove that the tree parameters are generically identifiable for 2-tree CFN and K3P mixtures. We also show that the $k$-cycle phylogenetic network parameter is identifiable under the K2P and K3P models. 
\end{abstract}

\section{Introduction}
\label{sec:intro}
A statistical model is identifiable if the map parameterizing the model is injective. This means that the parameters producing a probability distribution in the model can be uniquely determined from the distribution itself which is a critical property for meaningful data analysis. In phylogenetic models, the identifiability of the tree parameter is especially important since this allows for evolutionary histories to be inferred from observed genetic data. 

The identifiability of the  tree parameter in basic models has already been established \cite{treeIDChang96} and a natural next step is to investigate the identifiability of the tree parameters in phylogenetic mixture models. Mixture models can be used to represent more complicated evolutionary events such as horizontal gene transfer. 
In \cite{matsenCFNID07}, Matsen and Steel showed that the tree parameters are not 
identifiable for 2-tree mixtures on four leaf trees under the Cavendar-Farris-Neyman (CFN) model.
On the other hand, positive results for the identifiability of tree parameters in other 
group-based models were obtained in both \cite{mixJCID10} and \cite{id3MixJC15}.
In \cite{mixJCID10}, the authors constructed linear invariants for 
2-tree Jukes-Cantor (JC) and Kimura 2-Parameter (K2P) mixtures to show that the tree parameters were identifiable and \cite{id3MixJC15} used direct computation to construct invariants for 3-tree JC mixtures to obtain identifiability results. These computations often involve time consuming 
Gr\"{o}bner basis computations, which are not possible to do for larger models. 
Similar calculations were also done in \cite{phyloNetId18} to establish the 
identifiability of the network parameters in Jukes-Cantor network models.

Our goal in this paper is to introduce a new algorithm that can be used to show that 
parameters of an algebraic statistical model are identifiable by computing 
independent sets in a naturally associated algebraic matroid. This allows us to 
avoid dealing with the vanishing ideals that are typically used and thus avoid 
Gr\"{o}bner basis calculations. We begin with a short background on generic 
identifiability and algebraic matroids in Section \ref{sec:prelim}. 
We then introduce the main algorithm we employ to prove identifiability results in 
Section \ref{sec:algID}.  We provide both an exact verification based on symbolic
computation and a randomized algorithm with probabilistic guarantees based on the Schwartz-Zippel Lemma.
In Section \ref{sec:phylo} we use the algorithm and
the Six-To-Infinity Theorem \cite{mixedUpTrees08} to show that the tree parameters are generically identifiable in 2-tree CFN and K3P mixture models. We end by showing how our algorithm can be used to extend the results in \cite{phyloNetId18} for JC phylogenetic networks to K2P and K3P networks.

%%%%%%%%%%%%%%%%%%%%%%%%%%%%%%%%%%%%%%%%%%%%%%%%%%%%%%%%%%%%%%
%%%%%%%%%%%%%%%%%%%%%%%%%%%%%%%%%%%%%%%%%%%%%%%%%%%%%%%%%%%%%%
%%%%%%%%%%%%%%%%%%%%%%%%%%%%%%%%%%%%%%%%%%%%%%%%%%%%%%%%%%%%%%
%%%%%%%%%%%%%%%%%%%%%%%%%%%%%%%%%%%%%%%%%%%%%%%%%%%%%%%%%%%%%%

\section{Preliminaries}
\label{sec:prelim}
In this section we provide some background on identifiability and describe some common tools used to prove identifiability results. We then discuss algebraic matroids which will be the main tool we use to prove identifiability results in this paper.

\subsection{Generic Identifiability in Algebraic Statistics}
Our main objects of focus in this paper will be parametric algebraic statistical models for discrete random variables. This means we have a rational map
\[
\phi : \Theta \to \Delta_r = \bigg\{p \in \rr^{r+1} ~:~ \sum_{i = 1}^{r+1} p_i = 1, ~ p_i \geq 0 \mbox{ for all $i$}  \bigg\}
\]
whose image we denote with $M$ is the model which sits inside a probability simplex $\Delta_r$.
This is a broad setting that includes many classic statistical models such as distributions of discrete random variables and the phylogenetic models that we will discuss in the later sections. The definitions and techniques presented in this paper could also be adapted for Gaussian random variable and other continuous models with finite dimensional natural parameter spaces.   

If we have a family of these models $\{M_s\}_{s=1}^k$ that all sit inside $\Delta_r$ and are indexed by a discrete parameter $s$, then we say that the discrete parameter $s$ is \emph{globally identifiable} if
$M_{s_1} \cap M_{s_2} = \emptyset$ for every distinct pair $\{s_1, s_2\}$ of values of $s$. Most models are not globally identifiable but may still satisfy a slightly weaker notion of identifiability instead. 

\begin{defn}
\label{def:genID}
Let $\{M_s \}_{s=1}^k$ be a collection of algebraic models that sit inside the probability simplex $\Delta_r$, then the parameter $s$ is \emph{generically identifiable} if for each 2-subset $\{s_1, s_2 \} \subset [k]$, 
\[
\dim(M_{s_1} \cap M_{s_2}) < \min(\dim(M_{s_1}), \dim(M_{s_2}))
\]
\end{defn}
Another way to think about generic identifiability is that the overlap of any two models in the family is a Lebesgue measure zero subset of both of the overlapping models. A typical tool for proving generic identifiability of algebraic models is the following proposition that uses the \emph{vanishing ideal} $\ii(M) = \{f \in \cc[p] ~:~ f(p) = 0 \mbox{ for all } p \in M\}$ of the model $M$.  

\begin{prop}
\label{prop:idealID}
\cite[Proposition 16.1.12]{algStat18}
Let $M_1$ and $M_2$ be two irreducible algebraic models which sit inside the probability simplex $\Delta_r$. If there exists polynomials $f_1$ and $f_2$ such that
\[
f_1 \in \mathcal{I}(M_1)\setminus \mathcal{I}(M_2) ~ \mbox{and}  ~
f_2 \in \mathcal{I}(M_2)\setminus \mathcal{I}(M_1)
\]
then $\dim(M_{1} \cap M_{2}) < \min(\dim(M_{1}), \dim(M_{2}))$. 
\end{prop}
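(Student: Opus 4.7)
The plan is to exploit the irreducibility of $M_1$ and $M_2$ together with the standard algebraic-geometry fact that a proper Zariski-closed subset of an irreducible variety has strictly smaller dimension. The two hypothesized polynomials $f_1, f_2$ are used symmetrically to bound $\dim(M_1 \cap M_2)$ against $\dim(M_1)$ and $\dim(M_2)$ respectively.

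First I would observe that every point of $M_1 \cap M_2$ lies in the zero set $V(f_2)$ of $f_2$, because $f_2 \in \mathcal{I}(M_2)$ vanishes on all of $M_2$. Therefore
\[
M_1 \cap M_2 \;\subseteq\; V(f_2) \cap M_1.
\]
Now since $f_2 \notin \mathcal{I}(M_1)$, the hypersurface $V(f_2)$ does not contain $M_1$, so $V(f_2) \cap M_1$ is a proper Zariski-closed subset of the irreducible variety $M_1$. Invoking the cited fact gives $\dim(V(f_2) \cap M_1) < \dim(M_1)$, and hence
\[
\dim(M_1 \cap M_2) \;\leq\; \dim(V(f_2) \cap M_1) \;<\; \dim(M_1).
\]
Running the symmetric argument with $f_1$ in place of $f_2$ and the roles of $M_1$ and $M_2$ interchanged yields $\dim(M_1 \cap M_2) < \dim(M_2)$, and combining these two strict inequalities produces the desired bound $\dim(M_1 \cap M_2) < \min(\dim(M_1), \dim(M_2))$.

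There is no real obstacle here: the only nontrivial ingredient is the fact that a proper closed subvariety of an irreducible variety has strictly smaller dimension. This is immediate from defining dimension via the maximal length of a chain of irreducible closed subsets (equivalently, via transcendence degree of the function field of the Zariski closure), since any irreducible component of $V(f_2) \cap M_1$ is a proper irreducible closed subset of $M_1$ and so extends any chain realizing its dimension to a longer chain inside $M_1$. One small bookkeeping remark is that $M_1$ and $M_2$ are semialgebraic subsets of $\Delta_r$, so throughout $\dim(\cdot)$ should be interpreted as the dimension of the Zariski closure; irreducibility of the models refers to irreducibility of those closures, which is exactly what allows the above step to go through.
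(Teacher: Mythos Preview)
Your argument is correct and is precisely the standard proof of this fact. Note that the paper does not supply its own proof of this proposition; it simply cites \cite[Proposition 16.1.12]{algStat18}, so there is nothing to compare against beyond confirming that your argument matches the classical one.
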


If the models $M_1$ and $M_2$ have the same dimension, then to ensure their intersection is lower dimensional, it suffices to show that $\ii(M_1) \neq \ii(M_2)$. This means it is enough to find either $f \in \ii(M_1) \setminus \ii(M_2)$ or
$f \in \ii(M_2) \setminus \ii(M_1)$.

We note here that the vanishing ideal of $M$ also completely defines the \emph{Zariski closure} of the model which is the algebraic variety  $\overline{M} = \{p \in \cc^{r+1} ~:~  f(p) = 0 \mbox{ for all } f \in \ii(M) \}$. Essentially, $\ii(M)$ gives an \emph{implicit} description of the model $M$. Computing the implicit description $\ii(M)$ typically requires Gr\"{o}bner basis computations which can be difficult, especially as the number of variables involved increases. 

%%%
%%%
%%%

\subsection{Algebraic Matroids}
In this section we introduce some basic concepts from matroid theory 
and some results on \emph{algebraic matroids} defined by irreducible varieties. 
The results collected here will be the main tools that we utilize to prove identifiability results. 

\begin{defn}\label{def:matroid}
A \emph{matroid} $\mm = (E,\ii)$ is a pair where $E$ is a finite set and $\mathcal{I} \subseteq 2^E$ satisfies
\begin{enumerate}
    \item $\emptyset \in \ii$.
    \item If $I' \subseteq I \in \ii$, 
    then $I' \in \ii$.
    \item If $I_1, I_2 \in \ii$ and $|I_2| > |I_1|$, then there exists $e \in I_2 \setminus I_1$ such that $I_1 \cup e \in \ii$.
\end{enumerate}
\end{defn}

There are many equivalent formulations of the axioms of a matroid but Definition \ref{def:matroid}
will be sufficient for the purpose of this paper (however, see \cite{oxleyMatroidTheory92} 
for further details). 

A classic example is the matroid defined by a matrix which is described more fully in the following example. 

\begin{ex}
\label{ex:linearMatroid}
Let $A \in k^{m \times n}$ be a matrix with entries in a field $k$ and $a_1,\ldots a_n$ be the columns of $A$. Then letting $E = [n]$ and taking $\ii$  to be the subsets of $E$ such that the corresponding columns of $A$ are linearly independent over $k$, defines a matroid. A matroid defined in this way is called a \emph{linear matroid} over the field $k$. More concretely, suppose that
\[
A = \left[ \begin{array}{cccc}
    1 & 1 & -1 & -2  \\
    3 & 1 &  2 & 4  \\
    0 & -1 & 1 & 2  \\
\end{array} \right]. 
\]
and for any $S \subseteq [4]$ let $A_S$ denote the submatrix of $A$ obtained by taking only the columns indexed by $S$. A set $S$ is an independent set in the matroid $\mm(A)$ defined by $A$ if and only if $\rank(A_S) = |S|$. In this case the independent sets of $\mm(A)$ are 
\[
\emptyset, \{1\}, \{2\}, \{3\}, \{4\}, \{1,2\}, \{1,3\}, \{1,4\}, \{2,3\}, \{2,4\}, \{1,2,3\}, \{1,2,4\}.
\]
Note that the maximal independent sets, called the bases of $\mm(A)$, completely determine all of the independent sets. 
\end{ex}

Linear matroids are one of the key examples of matroids, and the name matroid itself is
supposed to indicate that matroids form a generalize of this linear algebraic
independence structure arising from a matrix.  

There is also a way to naturally associate a matroid to a variety 
and some recent work has been done studying and applying such matroids 
\cite{algMatroidsWithGraphSym,compAlgMat14,algMatroidsInAction}. 
All these matroids are examples of \emph{algebraic matroids} though for practical
purposes it can be more useful to think of the following geometric characterization.

\begin{defn}
Let $V \subset k^n$ be an irreducible variety over the field 
$k$ and for $S \subseteq [n]$ let $\pi_S: k^n \to k^{|S|}$ be the projection 
onto the coordinates in $S$.  Let $\overline{\pi_S(V)}$ denote the Zariski closure of the 
projection of $V$.   Then the pair $([n],\ii_V)$ defines a matroid where
\[
\ii_V = \{S \subseteq [n] ~:~ \overline{\pi_S(V)} = k^{|S|} \}
\]
which is called the \emph{coordinate projection matroid} of $V$ and denote by $\mm(V)$. 
\end{defn}

The geometric perspective on algebraic matroids can also be phrased in an algebraic language.

\begin{prop}
\label{prop:idealMat}
Let $V \subset k^n$ be an irreducible variety.  
Let $P \subseteq k[x_1, \ldots, x_n]$ be the 
vanishing ideal of $V$.
A set $S$ is an independent set of the coordinate projection matorid $\mm(V)$
if and only if 
\[
 P \cap k[x_i : i \in S] = \langle 0 \rangle.
\] 
\end{prop}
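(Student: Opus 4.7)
The plan is to deduce the proposition from the Closure Theorem in elimination theory, which identifies the Zariski closure of a coordinate projection with the variety of an elimination ideal. Set $P_S := P \cap k[x_i : i \in S]$. The Closure Theorem states that, since $P$ is the vanishing ideal of $V$, the Zariski closure of $\pi_S(V)$ in $k^{|S|}$ is exactly the vanishing locus $V(P_S)$ inside $k^{|S|}$. Thus the condition $\overline{\pi_S(V)} = k^{|S|}$ translates to $V(P_S) = k^{|S|}$.

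Next, I would show that $V(P_S) = k^{|S|}$ is equivalent to $P_S = \langle 0\rangle$. One direction is trivial: if $P_S = \langle 0 \rangle$ then every point of $k^{|S|}$ satisfies every element of $P_S$. For the converse, assume $V(P_S) = k^{|S|}$. Any $f \in P_S$ vanishes on all of $k^{|S|}$, and since we are working over an infinite (in fact algebraically closed) field $k$, the only polynomial in $k[x_i : i \in S]$ that vanishes identically is zero; hence $P_S = \langle 0 \rangle$. Combining this equivalence with the Closure Theorem yields
\[
S \in \ii_V \iff \overline{\pi_S(V)} = k^{|S|} \iff V(P_S) = k^{|S|} \iff P_S = \langle 0 \rangle,
\]
which is exactly the claimed characterization.

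The main technical obstacle is really just a bookkeeping one: invoking the Closure Theorem carefully. Classically it is proved over an algebraically closed field via Gr\"obner bases (eliminating the variables $x_i$ for $i \notin S$ and applying the theorem that the elimination ideal cuts out the closure of the projection), so I would be explicit about the standing assumption that $k$ is algebraically closed (or at least infinite, so that polynomials are determined by their values). Irreducibility of $V$ is not actually needed for the equivalence itself, but it is built into the hypotheses so that $\mm(V)$ is a well-defined matroid. With these points in place, the proof is a short chain of equivalences, so I do not anticipate any further difficulty.
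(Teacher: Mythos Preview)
Your proposal is correct and is essentially the same argument as the paper's: the paper simply states that $P\cap k[x_i:i\in S]$ is the vanishing ideal of $\pi_S(V)$ (your Closure Theorem step) and that a set has zero vanishing ideal if and only if its Zariski closure is the whole space (your second equivalence). Your write-up is just a more explicit version of the same two facts, with the algebraically closed hypothesis spelled out.
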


\begin{proof}
This follows directly from the fact that $P \cap k[x_i : i \in S]$ is the
vanishing ideal of the coordinate projection $\pi_S(V)$ and the fact that
the vanishing ideal of a set is $\langle 0 \rangle$ if and only if its Zariski closure
is all of space.  
\end{proof}

Recall the more familiar definition of an algebraic matroid.

\begin{defn}
Let $L / k$ be a field extension and let $E = \{\alpha_1, \ldots, \alpha_n \} \subseteq L$.
The \emph{algebraic matroid} $(E, \ii)$ consists of all sets $S \subseteq E$ that
are algebraically independent over $k$.  
\end{defn}

Note that Proposition \ref{prop:idealMat}  shows that the coordinate projection matroid
is an algebraic matroid where the field extension is ${\rm Frac}(k[x_1, \ldots, x_n]/P) / k$ and
$E = \{x_1, \ldots, x_n \}$, the images of the variables in the
fraction field ${\rm Frac}(k[x_1, \ldots, x_n]/P)$.

When the variety $V$ is parameterized we are able to construct 
the matroid $\mm(V)$ using the Jacobian matrix of the 
parameterization (see \cite{compAlgMat14}). 

\begin{prop}
Suppose that 
$\phi(\theta_1, \ldots, \theta_d) = (\phi_1(\theta), \ldots, \phi_n(\theta))$ 
parameterizes $V$ (that is, $V = \overline{\phi( k^d )}$). Let
\begin{equation}
\label{eqn:paramJac}
    J(\phi) = \left(\frac{\partial \phi_j}{\partial \theta_i} \right), 1 \leq i \leq d ,~ 1 \leq j \leq n 
\end{equation}
be the transpose of the Jacobian matrix of $\phi$. 
Then the matroid defined by the matrix $J(\phi)$ using linear independence 
over the fraction field $\mbox{Frac}\left( k[\mathbf{\theta}] \right) = k(\theta)$
gives the same matroid as $\mm(\overline{\phi(k^d)})$.  
\end{prop}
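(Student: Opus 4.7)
The plan is to show both matroids have the same independent sets. Fix $S \subseteq [n]$; I want to prove that $S$ is independent in the coordinate projection matroid $\mm(V)$ (where $V = \overline{\phi(k^d)}$) if and only if the columns of $J(\phi)$ indexed by $S$ are linearly independent over $k(\theta)$.

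First, I would translate the condition defining $\mm(V)$. By definition, $S \in \ii_V$ means $\overline{\pi_S(V)} = k^{|S|}$, i.e., the composed map $\pi_S \circ \phi : k^d \to k^{|S|}$ is dominant. Since the image of a rational map is constructible and its Zariski closure is irreducible, dominance is equivalent to the statement that $\dim \overline{(\pi_S\circ\phi)(k^d)} = |S|$.

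Next, I would invoke the standard fact from algebraic geometry (a Jacobian criterion for dominance, or the rank-nullity formula for differentials of morphisms) that for any rational map $\psi : k^d \to k^m$ with components $(\psi_1,\ldots,\psi_m)$, the dimension of $\overline{\psi(k^d)}$ equals the generic rank of the Jacobian matrix $(\partial \psi_j / \partial \theta_i)$, and moreover this generic rank equals the rank of the same matrix viewed as a matrix with entries in the field $k(\theta)$. Applying this to $\psi = \pi_S \circ \phi$, whose Jacobian is precisely the submatrix of $J(\phi)$ formed by the columns indexed by $S$, the dimension $|S|$ is attained if and only if those $|S|$ columns are linearly independent over $k(\theta)$. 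Chaining the equivalences gives $S \in \ii_V$ iff $S$ is independent in the linear matroid of $J(\phi)$.

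The one subtlety, which is the main thing to justify carefully, is the identification of the generic pointwise rank of the Jacobian with its rank over the rational function field $k(\theta)$: linear dependence of columns over $k(\theta)$ is witnessed by a syzygy with coefficients in $k(\theta)$, whose denominators vanish only on a proper closed subset of $k^d$, so linear (in)dependence over $k(\theta)$ agrees with (in)dependence at a generic point of $k^d$. With that in hand, the two matroids have identical independent sets and are therefore equal, so no further verification of the matroid axioms is needed; the result follows directly from Proposition~\ref{prop:idealMat}'s perspective via the algebraic-matroid interpretation of the field extension ${\rm Frac}(k[x]/P)/k$.
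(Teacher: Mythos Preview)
The paper does not actually prove this proposition; it is stated with a parenthetical reference ``(see \cite{compAlgMat14})'' and no argument is given. So there is no in-paper proof to compare against.

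Your argument is the standard one and is correct in outline: reduce independence of $S$ in $\mm(V)$ to dominance of $\pi_S\circ\phi$, then identify dominance with the generic rank of the Jacobian being $|S|$, and finally identify generic rank with rank over $k(\theta)$. One point you should make explicit is the characteristic-zero (or separability) hypothesis: the step ``$\dim\overline{\psi(k^d)}$ equals the generic rank of the Jacobian of $\psi$'' fails in positive characteristic (e.g.\ the Frobenius map has identically zero Jacobian but is dominant). The paper only states this hypothesis in the subsequent Proposition~\ref{prop:nmLocus}, so it is implicit here; flagging it would tighten your proof. The last sentence of your proposal, invoking Proposition~\ref{prop:idealMat} and the fraction-field interpretation, is not really needed for the argument you have already given and reads as an afterthought; you may drop it.
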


Thus we have multiple different ways that we can view the same 
matroid which will be convenient to use at different times. 
We end this section with an example that illustrates these different versions of the same matroid. 

\begin{ex}\label{ex:binomialrv}
Let $M \subset \pp^2$ be the model for a binomial random variable with 2 trials in projective space. This model is parameterized by the homogeneous map
$\phi: \pp^1 \to \pp^2$ defined by 
$\phi_i(t, \theta) = t \binom{2}{i} \theta^i (1-\theta)^{2-i}$ for $i = 0, 1, 2$. The transposed Jacobian is
\[
J(\phi) =  \left[ \begin{array}{ccc}
    (1-\theta)^2 & 2 \theta (1-\theta) & \theta^2  \\
     -2 t (1-\theta) & 2t( 1 - 2\theta) & 2t\theta
\end{array} \right]. 
\]
Let $\mm_\phi$ denote the corresponding matroid which has ground set $\{0, 1, 2 \}$ corresponding to the columns of $J(\phi)$. The independent sets are sets $S \subseteq \{0, 1, 2 \}$ such that columns in $S$ are linearly independent over the fraction field $\cc(t, \theta)$. One can verify through direct computation that the independent sets are exactly $S \subseteq \{0, 1, 2 \}$ such that $\#S < 3$. 

On the other hand, the homogeneous vanishing ideal of $M$ is  $\ii(M) = 
\langle 4p_0p_2 - p_1^2 \rangle$. Its corresponding matroid, which we denote by $\mm_{\ii(M)}$, also has ground set $\{0, 1, 2\}$ and a set $S \subseteq \{0, 1, 2\}$ is an independent set in $\mm_{\ii(M)}$ if
$\ii(M) \cap \cc[S] = \langle 0 \rangle$ where $\cc[S] = \cc[p_i : i \in S]$. In this case it is straightforward to see that the independent sets are again the sets $S$ such that $\#S < 3$ and so $\mm_\phi =  \mm_{\ii(M)}$. 
\end{ex}

Note that, as we have done in Example \ref{ex:binomialrv}, we will usually
work with homogeneous vanishing ideals of algebraic statistical models.
This has the advantage of simplifying some computations, but does not affect the underlying theory.

%%%%%%%%%%%%%%%%%%%%%%%%%%%%%%%%%%%%%%%%%%%%%%
%%%%%%%%%%%%%%%%%%%%%%%%%%%%%%%%%%%%%%%%%%%%%%%
%%%%%%%%%%%%%%%%%%%%%%%%%%%%%%%%%%%%%%%%%%%%%%
%%%%%%%%%%%%%%%%%%%%%%%%%%%%%%%%%%%%%%%%%%%%%%%

\section{Certifying Generic Identifiability With Algebraic Matroids}
\label{sec:algID}
In this section we make a few basic observations that will lead to a 
new algorithm for certifying the generic identifiability of a 
family of models using their associated algebraic matroids. 
Our starting point for proving identifiability using algebraic methods
is Proposition \ref{prop:idealID}.  However, it often difficult to
find the polynomials required  by Proposition \ref{prop:idealID} to  certify identifiability.  
The following proposition is the driver of our algebraic matroid based procedure for 
verifying identifiability.

\begin{prop}
\label{prop:matID}
Let $M_1$ and $M_2$ be two irreducible algebraic models which sit inside the probability simplex $\Delta_r$. Without loss of generality assume $\dim(M_1) \geq \dim(M_2)$.
If there exists a subset $S$ of the coordinates such that
\begin{equation}
\label{eqn:matID}
S \in \mm(M_2)\setminus \mm(M_1) 
\end{equation}
then $\dim(M_1 \cap M_2) < \min(\dim(M_1), \dim(M_2))$.
\end{prop}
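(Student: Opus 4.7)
The plan is to translate the matroid hypothesis into a polynomial certificate of the sort that Proposition \ref{prop:idealID} calls for, and then to extract the dimension inequality by a direct argument on the Zariski closures $V_i := \overline{M_i}$, which are irreducible varieties by assumption.

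First I would invoke Proposition \ref{prop:idealMat} in both directions. Because $S \notin \mm(M_1)$, the elimination ideal $\ii(M_1) \cap \cc[x_i : i \in S]$ is nonzero, so we may pick a nonzero polynomial $f$ lying in it; in particular $f \in \ii(M_1)$ and $f$ uses only the variables indexed by $S$. On the other hand, $S \in \mm(M_2)$ gives $\ii(M_2) \cap \cc[x_i : i \in S] = \langle 0 \rangle$, so since our $f$ is a nonzero element of $\cc[x_i : i \in S]$, necessarily $f \notin \ii(M_2)$. Thus $f$ is an explicit element of $\ii(M_1) \setminus \ii(M_2)$.

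Next I would finish with a direct dimension argument rather than quoting Proposition \ref{prop:idealID} (which as stated asks for the symmetric pair of certificates). Since $f$ vanishes identically on $V_1$ but not on $V_2$, and $V_2$ is irreducible, the subvariety $V_2 \cap V(f)$ is a proper closed subset of $V_2$, and therefore $\dim(V_2 \cap V(f)) < \dim V_2$. Because $M_1 \cap M_2 \subseteq V_1 \cap V_2 \subseteq V_2 \cap V(f)$, we obtain $\dim(M_1 \cap M_2) < \dim M_2 = \min(\dim M_1, \dim M_2)$, as required.

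The argument is short and the only mild obstacle is a bookkeeping one: the dimensions of the semi-algebraic models $M_i$ must be interpreted as the dimensions of their Zariski closures $V_i$, and the matroid condition must be read in the sense of the coordinate projection matroid of $V_i$. This convention is already implicit throughout the paper (the matroid $\mm(M_i)$ is defined via $\overline{\pi_S(V_i)}$), so once this identification is in place the two-step proof above goes through verbatim.
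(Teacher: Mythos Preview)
Your argument is correct, and it takes a slightly different and arguably more streamlined route than the paper. The paper splits into two cases: when $\dim(M_1)>\dim(M_2)$ it uses the strict dimension inequality to produce a second polynomial $f_2\in\ii(M_2)\setminus\ii(M_1)$ so that both hypotheses of Proposition~\ref{prop:idealID} are in hand; when $\dim(M_1)=\dim(M_2)$ it simply observes that $\mm(M_1)\neq\mm(M_2)$ forces $M_1\neq M_2$ and invokes the fact that two distinct irreducible varieties of equal dimension intersect in lower dimension. You instead bypass Proposition~\ref{prop:idealID} entirely: the single polynomial $f\in\ii(M_1)\setminus\ii(M_2)$ already cuts $V_2$ down to a proper closed subvariety, and irreducibility of $V_2$ gives the strict drop in dimension uniformly, with no case split. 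What the paper's version buys is a direct appeal to its stated toolkit (Proposition~\ref{prop:idealID}); what yours buys is a one-line finish that makes clear only one certificate polynomial is actually needed once the dimension ordering $\dim(M_1)\geq\dim(M_2)$ has been fixed.
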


Note that we abuse notation and write $\mm(M)$ to denote the matroid $\mm(\overline{M})$.

\begin{proof}
Since $M_1$ and $M_2$ are irreducible their vanishing ideals $\ii(M_1)$ and $\ii(M_2)$ are prime and so define the same matroid as $M_1$ and $M_2$ respectively. First suppose that $\dim(M_1) > \dim(M_2)$. This dimension inequality implies that there is a polynomial $f_2 \in \ii(M_2) \setminus \ii(M_1)$. Then since $S \in \mm(M_2)\setminus \mm(M_1)$, it holds that
$\ii(M_1) \cap k[S] \neq \zeroideal$ but $\ii(M_2) \cap k[S] = \zeroideal$ which implies that there exists $f_1 \in \ii(M_1) \setminus \ii(M_2)$ and so the result follows by Proposition \ref{prop:idealID}. 

Now suppose that $\dim(M_1) = \dim(M_2)$. The existence of $S \in \mm(M_2)\setminus \mm(M_1)$ implies that $M_1 \neq M_2$. Two irreducible models of the same dimension must either be equal or have lower dimensional intersection so the inequality of $M_1$ and $M_2$ implies the result.
\end{proof}

In essence, Proposition  \ref{prop:matID} can certify the existence of the desired polynomials for 
applying Proposition \ref{prop:idealID}, without necessarily finding them, only proving they exist. 
Note that Proposition \ref{prop:matID} is weaker then 
Proposition \ref{prop:idealID}.
This because there can be  models with different ideals but that have  the same matroid. 
This is due to the fact that the matroid only keeps track of which sets of coordinates have polynomial relations in the ideals of the models but not the nature of the polynomial relations themselves.  
This is illustrated in Example \ref{ex:matroidIDFail}.

\begin{prop}
\label{prop:nmLocus}
\cite[Proposition 2.5]{compAlgMat14} Let $k$ be a field of characteristic zero and $V \subset k^n$ be a variety parameterized by $\phi$ with Jacobian $J(\phi)$ defined as in Equation (\ref{eqn:paramJac}). Then the matrix obtained by plugging in generic parameter values into $J(\phi)$ gives a linear matroid over $k$ which is the same as that defined by $J(\phi)$ with symbolic parameters over $k(\theta)$ and thus the same as $\mm(V)$.  
\end{prop}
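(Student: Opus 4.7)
The plan is to reduce independence in the Jacobian matroid to the non-vanishing of a polynomial (or rational) function, and then invoke the standard fact that a nonzero element of $k(\theta)$ remains nonzero under a generic specialization, provided $k$ is infinite. The identification with $\mm(V)$ itself is already supplied by the preceding proposition, so only the equivalence between the symbolic and specialized matroids needs to be addressed.

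In more detail, I would first observe that a subset $S \subseteq [n]$ of columns is independent in the matroid determined by $J(\phi)$ over $k(\theta)$ if and only if some $|S| \times |S|$ minor of the submatrix $J(\phi)_S$ is nonzero as an element of $k(\theta)$. Each such minor is a rational function in $\theta$, so ``nonzero in $k(\theta)$'' means ``not the zero rational function.'' For every $S$ that is independent over $k(\theta)$, fix one particular $|S| \times |S|$ minor $\Delta_S(\theta) \in k(\theta)$ that does not vanish identically, and let $U_S \subseteq k^d$ be the Zariski open complement of the zero locus of its numerator (intersected with the open set where all entries of $J(\phi)$ are defined, to accommodate the case of rational $\phi$).

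Next, let $U = \bigcap_{S \text{ indep}} U_S$. Since there are only finitely many subsets of $[n]$ and each $U_S$ is a nonempty Zariski open subset of $k^d$, the intersection $U$ is also a nonempty Zariski open subset. Because $k$ has characteristic zero it is infinite, and thus $U$ contains a $k$-rational point $\theta_0$; any such $\theta_0$ is a valid ``generic'' specialization. For this $\theta_0$, every set $S$ independent over $k(\theta)$ remains independent in the specialization because $\Delta_S(\theta_0) \neq 0$, while every set $T$ dependent over $k(\theta)$ remains dependent because each maximal minor of $J(\phi)_T$ is the zero rational function and therefore evaluates to $0$ at $\theta_0$. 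Hence the matroid of $J(\phi)(\theta_0)$ over $k$ coincides with the matroid of $J(\phi)$ over $k(\theta)$, which by the previous proposition equals $\mm(V)$.

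The main subtlety is the characteristic zero hypothesis: it is exactly what guarantees that the nonempty Zariski open set $U \subseteq k^d$ actually contains $k$-rational points, so that the specialization can be carried out inside $k$ rather than in an algebraic closure. A secondary care point is handling parameterizations $\phi$ whose coordinates are rational rather than polynomial; this is absorbed by restricting $U$ to the common domain of definition of the entries of $J(\phi)$, which is again a nonempty Zariski open set.
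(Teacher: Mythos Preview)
Your argument is correct and is essentially the standard proof: independence of $S$ in $J(\phi)$ over $k(\theta)$ is witnessed by a nonvanishing minor, the finitely many such witnesses cut out a nonempty Zariski open set $U\subseteq k^d$, and any $\theta_0\in U$ gives a specialization with the same matroid since dependent sets stay dependent automatically while independent sets stay independent by construction. The paper itself does not supply a proof of this proposition; it is quoted from \cite{compAlgMat14}, and your argument matches the approach one would expect to find there.
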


We use $\mm(J(\phi),k)$ to denote the linear matroid we get by plugging in random parameter values for $\theta$ and $\mm(J(\phi),k(\theta))$ to denote the symbolic matroid. 
With these two propositions we are ready to define the main algorithm that we use to prove identifiability results.

\begin{alg}
\label{alg:certID}
Input: Two maps $\phi_1, \phi_2$ parameterizing models $M_1$ and $M_2$ in $k^n$ with $\dim(M_1) \geq \dim(M_2)$, a number of trials $t$. 
\newline
Output: A certificate $S$ satisfying Equation (\ref{eqn:matID}) in Proposition \ref{prop:matID}.
\newline
For $i = 1, 2, \ldots t$:
\begin{itemize}
    \item Randomly select $T \subseteq [n]$ such that $|T| \leq \dim(M_2)$.
    
    \item If $T \in \mm(J(\phi_2), k)\setminus \mm(J(\phi_1),k)$:
    \begin{itemize}
        \item If $T \in \mm(J(\phi_2), k(\theta))\setminus \mm(J(\phi_1),k(\theta))$:
        \begin{itemize}
            \item Then, $S = T$.
        \end{itemize}
        
    \end{itemize}
\end{itemize}
Output: S or report that no certificate was found.
\end{alg}
In summary, the algorithm works by randomly plugging in a numerical value for $\theta$
and testing random subsets until it finds an example of a set $S$ where the submatrices
of the Jacobians have different rank.  Random rational numbers are used so that
the rank computations are exactly calculated symbolically (rather than using a numerical
rank test with floating point numbers).  Once a candidate set is found, then
an exact symbolic computation over $k(\theta)$ is performed to verify the
result exactly.

In cases where it is too time consuming to compute over $k(\theta)$ we can use the Schwartz-Zippel Lemma from polynomial identity testing to produce a certificate that satisfies equation (\ref{eqn:matID}) with probability $1 - \epsilon$.

\begin{lemma}(Schwartz-Zippel) Let $f \in k[x_1, \ldots x_n]$ be a non-zero polynomial of total degree $\alpha$. Let $E$ be a finite subset of $k$ and $r_1, \ldots r_n$ be selected at random independently and uniformly from $E$. Then
\[
P(f(r_1, \ldots, r_n) = 0) \leq \frac{\alpha}{|E|}.
\]
\end{lemma}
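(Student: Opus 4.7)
The plan is to prove the Schwartz-Zippel Lemma by induction on the number of variables $n$, which is the standard approach.

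For the base case $n = 1$, a nonzero univariate polynomial $f \in k[x_1]$ of degree $\alpha$ has at most $\alpha$ roots (a consequence of the factor theorem and $k[x_1]$ being a unique factorization domain). Since $r_1$ is chosen uniformly from $E$, the probability that $f(r_1) = 0$ is at most $\alpha/|E|$, as desired.

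For the inductive step, assume the statement holds for polynomials in fewer than $n$ variables. Given $f \in k[x_1, \ldots, x_n]$ nonzero of total degree $\alpha$, I would isolate the variable $x_1$ and write
\[
f(x_1, \ldots, x_n) = \sum_{i=0}^{d} x_1^i \, g_i(x_2, \ldots, x_n),
\]
where $d$ is the largest power of $x_1$ appearing in $f$ so that $g_d$ is nonzero. The total degree of $g_d$ is at most $\alpha - d$. Now decompose the event $\{f(r_1, \ldots, r_n) = 0\}$ according to whether $g_d(r_2, \ldots, r_n)$ vanishes. By the inductive hypothesis applied to $g_d$ in $n-1$ variables,
\[
P\bigl(g_d(r_2, \ldots, r_n) = 0\bigr) \leq \frac{\alpha - d}{|E|}.
\]
Conditional on $g_d(r_2, \ldots, r_n) \neq 0$, the polynomial $f(x_1, r_2, \ldots, r_n) \in k[x_1]$ is a nonzero univariate polynomial of degree exactly $d$, so by the base case the conditional probability that $f(r_1, r_2, \ldots, r_n) = 0$ is at most $d/|E|$. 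Combining via the law of total probability gives the bound $\alpha/|E|$.

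The main obstacle is purely bookkeeping in the inductive step: one must carefully justify that $g_d$ has degree at most $\alpha - d$ and that fixing the last $n-1$ coordinates (on the event $g_d \neq 0$) produces a univariate polynomial whose leading coefficient is nonzero, so that the degree-$d$ bound from the base case applies. Independence of the $r_i$ ensures the conditional statements behave correctly. No other subtleties arise, and the two contributions $(\alpha-d)/|E|$ and $d/|E|$ add to give exactly the claimed bound.
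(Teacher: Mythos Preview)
Your proof is the standard and correct induction argument for the Schwartz--Zippel Lemma. Note, however, that the paper does not actually supply a proof of this lemma: it is quoted as a known result from polynomial identity testing and used as a black box, so there is no ``paper's own proof'' to compare against. Your write-up is fine as a self-contained justification; the only minor point to clean up is that in the final step you should make explicit the inequality
\[
P(f=0) \leq P(g_d=0) + P\bigl(f=0 \mid g_d \neq 0\bigr) \leq \frac{\alpha-d}{|E|} + \frac{d}{|E|} = \frac{\alpha}{|E|},
\]
rather than leaving it as ``combining via the law of total probability,'' since the bound uses $P(f=0,\,g_d=0)\leq P(g_d=0)$ rather than a full conditioning on both cases.
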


Determining if $S \in \mm(J(\phi_2), k(\theta))\setminus \mm(J(\phi_1),k(\theta))$ can be done by evaluating minors of the submatrices of the Jacobian matrices corresponding to $S$. Since minors are polynomials in the entries of the matrices, we can use the this lemma to bound the probability that $S \in \mm(J(\phi_2), k(\theta))\setminus \mm(J(\phi_1),k(\theta))$ without ever computing over $k(\theta)$. 

\begin{cor}
\label{cor:matroidSZ}
Let $S \in \mm(J(\phi), k(\theta))$ and let $E \subseteq k$ be a finite set such that $|E| > \alpha$ where $\alpha$ is the degree of an $|S| \times |S|$ minor of $J(\phi)_S$ that is not identically zero. Let $r_1, \ldots r_d$ be selected independently and uniformly at random from $E$ and let $J = J(\phi)|_r$ be the specialization of $J(\phi)$ at $r_1 \ldots r_d$. 
Then
\[
P(S \notin \mm(J)) \leq \frac{\alpha}{|E|}.
\]
\end{cor}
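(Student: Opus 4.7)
The plan is to reduce the corollary to a direct application of the Schwartz--Zippel Lemma applied to a single, well-chosen minor polynomial. First I would unwind the hypothesis $S \in \mm(J(\phi), k(\theta))$: by definition of the linear matroid over $k(\theta)$, the columns of $J(\phi)_S$ indexed by $S$ are linearly independent over the fraction field, which is equivalent to $J(\phi)_S$ having rank $|S|$ over $k(\theta)$. This in turn is equivalent to the existence of at least one $|S|\times |S|$ minor of $J(\phi)_S$ that is not identically zero as a polynomial in $\theta_1,\ldots,\theta_d$. Call such a minor $f(\theta)$, and note that by the Cauchy--Binet style expansion of a determinant its total degree is bounded by $\alpha$ as in the corollary's hypothesis.

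Next, I would observe the direction of implication that does the actual work: specialization can only drop rank. Concretely, $S \notin \mm(J)$ means $J_S$ has rank strictly less than $|S|$ over $k$, which forces \emph{every} $|S|\times |S|$ minor of $J_S$ to vanish. In particular, the specialization $f(r_1,\ldots,r_d)$ of our chosen minor must equal $0$. Thus the event $\{S \notin \mm(J)\}$ is contained in the event $\{f(r)=0\}$, and so
\[
P(S \notin \mm(J)) \;\leq\; P(f(r_1,\ldots,r_d)=0).
\]
Since $f$ is a nonzero polynomial of total degree at most $\alpha$, and $r_1,\ldots,r_d$ are chosen independently and uniformly from $E$ with $|E|>\alpha$, the Schwartz--Zippel Lemma applies directly to give $P(f(r)=0) \leq \alpha/|E|$, which completes the argument.

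There is not really a hard step here; the argument is essentially a one-line consequence of Schwartz--Zippel once one picks the right minor. The only subtle point worth stating carefully is the containment of events above, i.e.\ that rank drop for the specialized matrix forces the chosen minor to vanish (rather than merely forcing \emph{some} minor to vanish). Making the proof self-contained may also require a brief remark that among the $|S|\times |S|$ minors of $J(\phi)_S$ at least one is nonzero as a polynomial, which is the content of the assumption together with the standard rank-via-minors characterization.
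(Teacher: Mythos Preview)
Your proposal is correct and follows essentially the same approach as the paper: pick a nonzero $|S|\times|S|$ minor $f$ guaranteed by the hypothesis, observe that $\{S\notin\mm(J)\}$ forces all specialized minors to vanish and hence is contained in $\{f(r)=0\}$, and apply the Schwartz--Zippel Lemma to $f$. The only cosmetic difference is that the paper names all the minors $f_1,\ldots,f_{\binom{d}{|S|}}$ before selecting a nonzero one, whereas you go straight to the single witness minor.
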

\begin{proof}
First note that the $|S| \times |S|$ minors of the matrix $J(\phi)_S$ are polynomials in $\theta$ which we denote by $f_i(\theta) \in k[\theta]$ for $1 \leq i \leq \binom{d}{|S|}$. Since $S \in \mm(J(\phi), k(\theta)) $ there exists at least one $f_j$ that is not identically zero. On the other hand, $S \notin \mm(J)$ if and only if $f_i(r) = 0$ for all $1 \leq i \leq \binom{d}{|S|}$ so
\[
P( S \notin \mm(J)) = P \left(f_i(r) = 0, 1
\leq i \leq \binom{d}{|S|} \right) \leq P(f_j(r) = 0)
\]
Letting $\deg(f_j) = \alpha$ and applying the Schwartz-Zippel Lemma gives
\[
P(f_j(r) = 0) \leq \frac{\alpha}{|E|}
\]
which gives us the desired result. 
\end{proof}

Corollary \ref{cor:matroidSZ} implies that the probability that $S$ is not independent in the original matroid $\mm(J(\phi), k(\theta))$ is $1 - \frac{\alpha}{|E|}$. We can make this probability even larger using amplification by independent trials. This naturally leads to the following algorithm which does not require any symbolic computation.

\begin{alg}
\label{alg:certID_SZ}
Input: Two maps $\phi_1, \phi_2$ parameterizing models $M_1$ and $M_2$ in $k^n$ with $\dim(M_1) \geq \dim(M_2)$, a number of trials $t$, a tolerance $\epsilon$. 
\newline
Output: A certificate $S$ satisfying Equation (\ref{eqn:matID}) in Proposition \ref{prop:matID} with probability at least $1 - \epsilon$. 
\newline
Initialize: Choose a finite subset $|E| \subseteq k$ such that $|E| > \alpha$ where $\alpha$ is the maximum degree of any $dim(M_2) \times dim(M_2)$ minor of $J(\phi_1)$.
\newline
For $i = 1, 2, \ldots t$:
\begin{enumerate}

    \item Randomly select $T \subseteq [n]$ such that $|T| \leq \dim(M_2)$.
    
    \item Sample points $r_1, \ldots r_d$ independently and uniformly at random from $E$.
    
    \item If $T \in \mm(J(\phi_2)|_r)\setminus \mm(J(\phi_1)|_r)$:
    \begin{enumerate}
        \item Choose $l$ such that $\left(\frac{\alpha}{|E|} \right)^l \leq \epsilon$.
        \item For $j = 1, 2, \ldots l$:
        \begin{itemize}
            \item Sample points $r_1', \ldots r_d'$ independently and uniformly at random from $E$.

            \item If $T \in \mm(J(\phi_1)|_r)$:
            \begin{itemize}
                \item Break and return to (1).
            \end{itemize}
        \end{itemize}
        
        \item Then, $S = T$.
        
    \end{enumerate}
\end{enumerate}
Output: S or announce that no certificate was found.
\end{alg}

Both Algorithm \ref{alg:certID} and Algorithm \ref{alg:certID_SZ} c
an be modified in the case that $\dim(M_1) = \dim(M_2)$. 
In that case, we can also accept $T$ as a certificate if it is an 
independent set for $M_1$ but not $M_2$ in both the numerical step and the symbolic step. 
This is because we just need to certify that the models are not equal in the case 
where they are the same dimension. This is a very general version of the algorithm 
and it can be fine tuned in many ways depending on the specifics of the models. 
One such modification in the case that the models are the same dimension would be 
to only check for sets $T$ such that $|T| = \dim(M_2)$. This is equivalent to searching 
for a basis for the matroid of one model that is not a basis for the other. 
If the matroids are not the same, then such a basis must exist since a matroid 
is uniquely determined by its bases.   However, from a practical standpoint, it is
faster to perform the symbolic rank calculations on smaller matrices, so hunting for
small sized sets that verify that the matroids are different can speed up computations.  

Using Algorithm \ref{alg:certID} and Proposition \ref{prop:matID} to certify 
identifiability has several advantages over approaches that rely on 
Proposition \ref{prop:idealID}. Algorithm \ref{alg:certID}  does not require an 
implicit description of the models $M_1$ and $M_2$ so time-consuming elimination 
computations are avoided. Symbolic computation is also only done to verify that a test set 
$T$ is in fact a certificate but not to find the candidate set. Proposition \ref{prop:nmLocus} 
guarantees that if there is a certificate $S$ that can be found symbolically, 
then it can be found numerically with probability 1 so we can minimize the 
amount of symbolic computation necessary. Lastly, we are frequently able to 
find certificates by just randomly searching for them which avoids the combinatorial complexity of computing the whole matroid. The downside of this is that the failure of the algorithm does not imply that the matroids are the same or that a discrete parameter is not identifiable. This type of failure is illustrated by Example \ref{ex:matroidIDFail}.

%%%%%%%%%%%%%%%%%%%%%%%%%%%%%%%%%%%%%%%%%%%%%%%%%%%%%%%%%%%%%%%%%%%%
%%%%%%%%%%%%%%%%%%%%%%%%%%%%%%%%%%%%%%%%%%%%%%%%%%%%%%%%%%%%%%%%%%%%
%%%%%%%%%%%%%%%%%%%%%%%%%%%%%%%%%%%%%%%%%%%%%%%%%%%%%%%%%%%%%%%%%%%%
%%%%%%%%%%%%%%%%%%%%%%%%%%%%%%%%%%%%%%%%%%%%%%%%%%%%%%%%%%%%%%%%%%%%

\section{Identifiability Of 2-tree mixtures for Generic Group-Based Models}
\label{sec:phylo}
In this section we demonstrate how Algorithm \ref{alg:certID} 
can be used to certify the identifiability of the tree parameters 
in group-based phylogenetic models. In Section \ref{sec:phyloNetID} we apply the
method to the identifiability of phylogenetic network models.
We begin with some basic background on phylogenetic models on trees. 

Many of our proofs in the following sections use supplementary files. We will reference relevant supplementary files or methods as needed. All of these files are located at the website:
\begin{center}
    \textbf{https://github.com/bkholler/MatroidIdentifiability}
\end{center}

\subsection{Preliminaries on Phylogenetic Models}
A $\kappa$-state phylogenetic Markov model on a $n$-leaf, leaf-labelled rooted binary tree $T$ gives us a joint distribution on the states of the leaves of $T$. This joint distribution is determined by associating a $\kappa$-state random variable $X_v$ to each internal vertex $v$ of $T$ and a $\kappa \times \kappa$ transition matrix $M^e$ to each directed edge $e = (u,v)$ of $T$ such that $M_{i,j}^e = P(X_v = j | X_u = i)$. A root distribution $\pi$ for the root $\rho$ of $T$ is also needed. The transition matrices $\{M^e\}_{e\in E(T)}$ and the root distribution $\pi$ are called the continuous parameters of the model. 

We let $[\kappa]$ be the state space of these random variables and $Int(T)$ be the set of internal vertices of $T$. Also let $X_i$ be the random variable associated to the leaf labelled $i$ for $i \in [n]$. Then the probability of observing a configuration 
$(x_1, \ldots x_n) \in [\kappa]^n$ of states at the leaves is 
\[
P(X_1 = x_1, \ldots, X_n = x_n) ~= 
\sum_{j \in [\kappa]^{Int(T)}}\pi_{j_\rho}\prod_{(u,v) \in E(T)}M_{j_u, j_v}^{(u,v)}. 
\]

\begin{ex}
Let $T$ be the three leaf tree pictured in Figure \ref{fig:threeLeafTree}. The random variables $Y_1$ and $Y_2$, which correspond to internal nodes, are hidden random variables of the model whereas the random variables $X_1, X_2, X_3$, which correspond to leaves, are observed. 

We let $M^i$ be transition matrices associated to each edge as pictured in Figure \ref{fig:threeLeafTree}. The transition matrix $M^i$ gives the probability of the random variables changing states along the corresponding edge. For instance, if we let $i, j \in [\kappa]$, then $P(X_1 = j | Y_1 = i) = M_{i,j}^1$. Lastly we choose a root distribution $\pi$ to be the distribution of the random variable $Y_1$. Then the probability of observing $(x_1, x_2, x_3) \in [\kappa]^3$ at the leaves is
\[
P(X_1 = x_1, X_2 = x_2, X_3 = x_3) ~= 
\sum_{(y_1, y_2) \in [\kappa]^2} \pi_{y_1} M_{y_1, y_2}^0 M_{y_1, x_1}^1 M_{y_2, x_2}^2 M_{y_2, x_3}^3.
\]
The first coordinate of $(y_1, y_2) \in [\kappa]^2$ corresponds to the root which has associated random variable $Y_1$. The second coordinate corresponds to the other internal vertex which has associated random variable $Y_2$.
\end{ex}

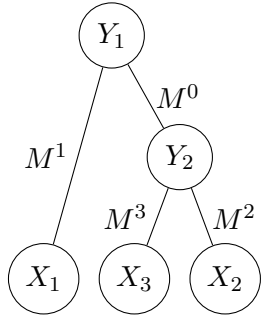
\begin{figure}
    \centering
    \begin{forest}
    for tree={circle, draw, l sep=20pt}
    [$Y_1$ 
        [$X_1$ , tier = leaf, edge label = {node[midway, left] {$M^1$}}
        ] 
        [$Y_2$, edge label = {node[midway, right] {$M^0$}}
            [$X_3$ , tier = leaf,  edge label = {node[midway, left] {$M^3$}}] 
            [$X_2$ , tier = leaf, edge label = {node[midway, right] {$M^2$}}]
        ]
    ]
    \end{forest}
   
    \caption{A three leaf tree with a random variable associated to each node of the tree. The matrices $M^i$ are the transition matrices encoding the probabilities of the random variables changing states.}
    \label{fig:threeLeafTree}
\end{figure}

We can see that the joint distribution of $(X_1, \ldots X_n)$ is given by polynomials in the entries of $\pi$ and the $M^e$. In other words, the model can be thought of as a polynomial map
\[
\psi_T : \Theta_T \to \Delta_{\kappa^n-1}
\]
where $\Theta_T$ is the \emph{stochastic parameter space} of the model and $\Delta_{\kappa^n-1}$ is the probability simplex. We can also consider the variety $V_T$ we get by taking the Zariski closure of the image of $\psi_T$. Polynomials in the vanishing ideal $\ii(V_T)$ are called \emph{phylogenetic invariants} and were first studied in
\cite{cavenderInv87, lakeInv87}. For more information on these models we refer the reader to \cite{steel2016phylogeny}. 

With such a model, the 2-tree mixture model for trees $T_1$ and $T_2$ leaf-labelled by $[n]$ is obtained by taking the image of the map
\[
\psi_{T_1,T_2}: \Theta_{T_1} \times \Theta_{T_2} \times [0,1] \to \Delta_{\kappa^n-1}
\]
defined by
\[
\psi_{T_1,T_2}(\theta_1,\theta_2,\lambda) = \lambda \psi_{T_1}(\theta_1) + (1-\lambda)\psi_{T_2}(\theta_2). 
\]
The 2-tree mixture model is the image of the map $\psi_{T_1,T_2}$ but the main object of interest is the variety naturally obtained by taking the Zariski closure of the image. Denote this variety by $V_{T_1} \ast V_{T_2}$ which is the \emph{join variety} of the varieties $V_{T_1}$ and $V_{T_2}$. For additional information of join varieties we refer the reader to \cite{harrisAlgGeo}.

\subsection{Group-Based Phylogenetic Models in Fourier Coordinates}
\label{sec:gpBasedModels}
Group-based models are a family of phylogenetic Markov models where the random variables associated to each vertex take values in a finite abelian group. This allows for a linear change of coordinates in which the models are given by monomial maps. 

\begin{defn}
Let $G$ be a finite abelian group of order $\kappa$ and $T$ a rooted binary tree. Then a group-based model on $T$ is a phylogenetic Markov model on $T$ such that for each transition matrix $M^e$, there exists a function $f_e : G \to \rr$ such that $M_{g,h}^e = f(g-h)$. 
\end{defn}

As mentioned above, we think of the random variables $X_v$ as taking values in the group $G$, and the transition matrices as being indexed by the elements of the group.
We will focus on the Cavendar-Farris-Neyman (CFN), Jukes-Cantor (JC), Kimura 2-Parameter (K2P), and Kimura 3-Parameter (K3P) models. The CFN model is associated to the group $\zz_2$ while the other three models are associated to the group $\zz_2 \times \zz_2$. The form of the transition matrices for the CFN and K3P models are pictured in Figure \ref{fig:gpBasedMats}. 
\begin{figure}
  \begin{minipage}{.25\linewidth}
    \centering
    \[\left[\begin{array}{cc}
      \alpha & \beta \\
      \beta & \alpha
    \end{array}\right]\]
   CFN
  \end{minipage}
  \begin{minipage}{.25\linewidth}
    \centering
    \[\left[\begin{array}{cccc}
      \alpha & \beta   & \gamma & \delta \\
      \beta  & \alpha  & \delta & \gamma \\
      \gamma & \delta  & \alpha & \beta \\
      \delta & \gamma  & \beta  & \alpha \\
    \end{array}\right]\]
    K3P
  \end{minipage}
  \caption{Transition matrices in the CFN and K3P models have the above forms}
  \label{fig:gpBasedMats}
\end{figure}

Group-based models allow for a linear change of coordinates that makes $\psi_T$ a monomial map, thus the variety $V_T$ is a \emph{toric variety} \cite{toricPhyloInv05}. This change of coordinates is called the discrete Fourier transform and was first applied to phylogenetic models in \cite{evansPhyloInv93, hendyPhyloInv96}. The new image coordinates, commonly called the Fourier coordinates, are denoted with $q_{g_1,\ldots, g_n}$ for $g_1,\ldots,g_n \in G$. This map is defined even more simply in the case that $G$ is $\zz_2$ or $\zz_2 \times \zz_2$ which we will restrict to. In this case, the map can be described in terms of the \emph{splits} of the tree which we briefly describe first.

A split of $[n]$ is a set partition $A|B$ of the set $[n]$. A split $A|B$ is valid for an unrooted binary tree $T$ leaf-labelled by $[n]$ if it can be obtained as the leaf sets of the two connected components of $T \setminus e$ for some edge $e$ of $T$. Furthermore, every such tree $T$ is uniquely determined by its set of splits which we denote by $\Sigma(T)$ \cite[Theorem 15.1.6]{algStat18}.

Now for each split $A|B \in \Sigma(T)$ and each group element $g \in G$ we have a parameter $a_g^{A|B}$. The parameterization of the model $\psi_T$ in the Fourier coordinates is given by
\begin{equation}
q_{g_1,\ldots g_n} = 
\begin{cases}
\prod_{A|B \in \Sigma(T)} a_{\sum_{i \in A}g_i}^{A|B} & \mbox{ if } \sum_{i \in [n]}g_i = 0 \\ 
0 & \mbox{ otherwise}
\end{cases}
\end{equation}
In the JC and the K2P models, further conditions are imposed on the parameters $a_g^{A|B}$ but in the \emph{generic group based models}, which are the CFN and K3P models, there are no other restrictions on the parameters. 

\begin{ex}
Let $T_1$ be the tree pictured in Figure \ref{fig:badTreeCase}. The nontrivial splits of $T_1$ are 
\newline
$\{12|3456, 123|456, 1234|56 \}$. Since each split is a set partition of $[6]$ into two parts, we can just use one of the parts of the set partition to denote the parameter corresponding to that split. So the parameterization $\psi_{T_1}$ in the Fourier coordinates will be
\[
q_{g_1, \ldots g_6} = \begin{cases}
a_{g_1}^1 a_{g_2}^2 a_{g_3}^3 a_{g_4}^4 a_{g_5}^1 a_{g_6}^6 a_{g_1 + g_2}^{12} a_{g_1 + g_2 + g_3}^{123} a_{g_5 + g_6}^{56} & \mbox{ if } \sum_{i \in [6]}g_i = 0 \\ 
0 & \mbox{ otherwise}
\end{cases}
\]
\end{ex}

The linearity of the Fourier transform allows us to also apply this change of coordinates to 2-tree mixture models as well \cite{mixJCID10} which makes the map $\psi_{T_1,T_2}$ into a binomial map. So we can view these mixture models as a family of algebraic models indexed by a discrete parameter which is 2-multisets of $[n]$-leaf trees. This leads to the following definition of generic identifiability of the tree parameters for 2-tree mixture models which is essentially a specialized version of Definition \ref{def:genID}.

\begin{defn}
\label{def:treeId}
The tree parameters of a 2-tree mixture model are \emph{generically identifiable} if for every pair of distinct multisets of $n$-leaf trees $\{T_1,T_2\}$ and $\{S_1,S_2\}$,
\[
\dim((V_{T_1} \ast V_{T_2}) \cap (V_{S_1} \ast V_{S_2})) <
\min(\dim(V_{T_1} \ast V_{T_2}),~\dim(V_{S_1} \ast V_{S_2})).
\]
\end{defn}

\subsection{Identifiability of Tree Parameters in 2-tree CFN and K3P Mixtures}
\label{sec:idMixCFN}
In this section we will discuss how Algorithm \ref{alg:certID} can be specialized for separating 2-tree CFN mixtures of 6-leaf trees and show how it can be used to prove generic identifiability of the tree parameters CFN model when combined with the following theorem of Matsen, Mossel, and Steel \cite{mixedUpTrees08}. All of the computations involved are available in the supplementary materials. 

\begin{thm}(Six-To-Infinity Theorem)
\cite[Theorem 23]{mixedUpTrees08}
\label{thm:sixToInfinity}
Suppose that the tree parameters $T_1,T_2$ are identifiable for a 2-tree mixture model for trees with six leaves. Then the tree parameters are identifiable for trees with $n$ leaves for all $n \geq 6$. 
\end{thm}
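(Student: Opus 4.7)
The approach will be a reduction: assume failure of identifiability at $n$ leaves and push it down to 6 leaves by marginalizing. By Definition~\ref{def:treeId}, non-identifiability at $n \geq 6$ provides distinct multisets $\{T_1,T_2\}\neq\{S_1,S_2\}$ of $n$-leaf binary trees whose mixture varieties satisfy
\[
\dim\bigl((V_{T_1}\ast V_{T_2}) \cap (V_{S_1}\ast V_{S_2})\bigr) \;=\; \min\bigl(\dim(V_{T_1}\ast V_{T_2}),\,\dim(V_{S_1}\ast V_{S_2})\bigr),
\]
so these two irreducible mixture varieties share an irreducible component. The goal is to produce a 6-leaf analogue of this coincidence and contradict the hypothesized 6-leaf identifiability.

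First I would verify that marginalization plays nicely with the mixture construction. For a 6-element subset $L\subseteq[n]$, let $\mu_L$ denote the linear projection that sums out the non-$L$ leaf coordinates of the joint distribution. Because $\mu_L$ is linear it commutes with the convex combination used to form the mixture, and for a single phylogenetic model one checks directly that $\mu_L(V_T)$ is Zariski-dense in $V_{T|_L}$, where $T|_L$ is the binary tree induced by restricting $T$ to the leaves in $L$ (with the resulting degree-two vertices suppressed). Hence $\mu_L$ sends $V_{T_1}\ast V_{T_2}$ dominantly onto $V_{T_1|_L}\ast V_{T_2|_L}$, and analogously for the $S$-pair. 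Dominance between irreducible varieties preserves the full-dimensional-intersection property, so if the two induced 6-leaf multisets $\{T_1|_L,T_2|_L\}$ and $\{S_1|_L,S_2|_L\}$ are still distinct, then we obtain a failure of identifiability for 6-leaf mixtures.

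The heart of the argument, and the step I expect to be the main obstacle, is the purely combinatorial claim that whenever $\{T_1,T_2\}\neq\{S_1,S_2\}$ as multisets of $n$-leaf binary trees with $n\geq 6$, some 6-leaf subset $L$ satisfies $\{T_1|_L,T_2|_L\}\neq\{S_1|_L,S_2|_L\}$. The difficulty is that small restrictions can accidentally coincide via a crossing pairing (e.g., $T_1|_L=S_2|_L$ and $T_2|_L=S_1|_L$), producing equal multisets even when the originals differ. My plan is to exploit quartet-based tree reconstruction: since a binary tree is determined by its induced quartets, pick a 4-leaf subset $Q$ on which at least one pairing of the two multisets disagrees, then enlarge $Q$ by carefully chosen extra leaves so that the ``swap'' $T_1\leftrightarrow S_1$ versus $T_1\leftrightarrow S_2$ cannot simultaneously hold on the larger subset. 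A short case analysis on how two distinct multisets can agree on all 4- and 5-leaf restrictions should isolate exactly where the ambiguity lies and show that six leaves always suffice (while four and five do not, as the crossing example demonstrates).

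With the combinatorial lemma in hand the theorem follows immediately: marginalize the $n$-leaf intersection via $\mu_L$ for the 6-leaf subset $L$ produced by the lemma, use dominance to transfer the full-dimensional intersection down to the 6-leaf mixture varieties, and apply the hypothesized 6-leaf identifiability to the distinct multisets $\{T_1|_L,T_2|_L\}\neq\{S_1|_L,S_2|_L\}$ to reach a contradiction.
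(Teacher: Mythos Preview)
The paper does not contain a proof of this statement: Theorem~\ref{thm:sixToInfinity} is quoted verbatim from \cite[Theorem~23]{mixedUpTrees08} and used as a black box in the proof of Theorem~\ref{thm:CFN_ID}. There is therefore no in-paper argument to compare your proposal against.

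That said, your outline matches the shape of the original Matsen--Mossel--Steel proof: reduce from $n$ leaves to $6$ leaves by marginalizing to a suitable leaf subset, use linearity of the marginalization map to push containments of mixture varieties down to the restricted trees, and invoke a combinatorial lemma guaranteeing that distinct multisets of $n$-leaf trees remain distinct after restriction to some $6$-element leaf set. You correctly identify this last combinatorial step as the substantive content, and your quartet-extension plan is the right instinct. Two small caveats worth tightening if you pursue this: first, the phrase ``dominance preserves the full-dimensional-intersection property'' is doing real work---it is cleanest to note that irreducibility forces one mixture variety to be contained in the other, and that containment is preserved under the (closure of the) linear image, rather than arguing about intersections directly. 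Second, the combinatorial lemma requires genuine case analysis (this is precisely why the threshold is six and not four or five), so ``a short case analysis'' undersells the effort involved; the original reference handles this carefully and is worth consulting for the details.
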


\begin{thm}
\label{thm:CFN_ID}
The tree parameters of the 2-tree CFN mixture model are generically identifiable for trees with at least 6 leaves.
\end{thm}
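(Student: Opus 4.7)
The plan is to combine Theorem \ref{thm:sixToInfinity} with Proposition \ref{prop:matID} implemented through Algorithm \ref{alg:certID}. By the Six-To-Infinity Theorem it suffices to establish identifiability on $n=6$; so the problem reduces to showing that for every pair of distinct 2-multisets $\{T_1,T_2\}, \{S_1,S_2\}$ of unrooted binary trees on six leaves, the intersection $(V_{T_1}\ast V_{T_2})\cap (V_{S_1}\ast V_{S_2})$ has strictly smaller dimension than both joins. There are $(2\cdot 6-5)!!=105$ such labelled trees, so $\binom{105+1}{2}=5565$ 2-multisets and a correspondingly large but finite collection of pairs of 2-multisets to test.

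First, I would write down the Fourier-coordinate parameterization from Section \ref{sec:gpBasedModels} for each $\psi_{T_i}$ so that $\psi_{T_1,T_2}(\theta_1,\theta_2,\lambda)=\lambda\psi_{T_1}(\theta_1)+(1-\lambda)\psi_{T_2}(\theta_2)$ becomes a binomial map with a sparse, symbolically manageable Jacobian. Next, I would cut down the number of cases by exploiting the $S_6$-action that permutes leaf labels: a common leaf permutation applied to both 2-multisets yields equivalent mixture varieties (and hence matroid-equivalent models), so only orbit representatives of pairs of 2-multisets need be tested. This symmetry reduction is essential for making the verification tractable.

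For each orbit representative pair I would run Algorithm \ref{alg:certID}: search for small $T\subseteq [2^6]$ by first computing the ranks of the Jacobian submatrices at random rational parameter specializations (which by Proposition \ref{prop:nmLocus} finds any existing certificate with probability one), and then confirm symbolically that $T\in \mm(J(\psi_{S_1,S_2}),k(\theta))\setminus \mm(J(\psi_{T_1,T_2}),k(\theta))$, swapping roles as needed in the equal-dimension case. Computing the dimensions of the two joins beforehand lets me apply Proposition \ref{prop:matID} in the correct direction. The actual running of the search and symbolic certification for each orbit representative would be carried out in the supplementary files referenced in Section \ref{sec:phylo}.

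The main obstacle is twofold. On the practical side, even after symmetry reduction the number of pairs is still substantial, so the computation leans heavily on the fact that Algorithm \ref{alg:certID} avoids Gr\"obner bases and only needs small-rank Jacobian computations. On the theoretical side, Proposition \ref{prop:matID} is only a sufficient condition: if two distinct mixture models happened to have identical coordinate projection matroids (the situation of Example \ref{ex:matroidIDFail}), the algorithm would return no certificate and the argument would stall for that pair. So the real content of the theorem is that such matroid collisions never occur among 2-tree CFN mixtures on six leaves, and the proof is completed by exhibiting, for every orbit representative pair, an explicit witness set $S$ produced by Algorithm \ref{alg:certID}.
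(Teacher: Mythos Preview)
Your outline matches the paper's approach almost exactly: reduce to $n=6$ via Theorem~\ref{thm:sixToInfinity}, use the $S_6$-action to cut the $\binom{5565}{2}=15{,}481{,}830$ pairs down to orbit representatives (the paper gets $22{,}773$), observe that all the joins have the same dimension, and run Algorithm~\ref{alg:certID} on each representative. So far so good.

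The gap is in your last paragraph. You assert that ``the real content of the theorem is that such matroid collisions never occur among 2-tree CFN mixtures on six leaves.'' This is precisely where your plan would fail. In the paper's computation, Algorithm~\ref{alg:certID} succeeds on $22{,}772$ of the $22{,}773$ orbit representatives but returns no certificate for one specific pair $\{T_1,T_2\},\{S_1,S_2\}$ (the caterpillar pair displayed in Equation~(\ref{eqn:badTreeCase}) and Figure~\ref{fig:badTreeCase}). The authors in fact conjecture that $\mm(V_{T_1}\ast V_{T_2})=\mm(V_{S_1}\ast V_{S_2})$ for this pair, so no amount of running the matroid algorithm would ever produce a witness set there. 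The paper finishes that single case by computing a degree-bounded Gr\"obner basis (to degree~4) for each join and exhibiting explicit separating invariants via Proposition~\ref{prop:idealID}. Your proposal has no fallback for this situation; you would need to anticipate that the matroid method can be inconclusive and supply an alternative (Gr\"obner, linear invariants, or some ad hoc argument) for any pairs on which Algorithm~\ref{alg:certID} stalls.
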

\begin{proof}
If we can show that the tree parameters are identifiable for 2-tree CFN mixtures of six leaf trees then we are done by the Six-To-Infinity Theorem. Our proof of this is computational and simply an application of Algorithm \ref{alg:certID} with some simplifications.

First, we note that instead of comparing every possible pair of 2-multisets of 
six leaf trees of which there $15,481,830$ it is enough to check up to the symmetry induced
by the permutation action of $S_6$ on leaf labels.
Consideration of symmetry reduces the problem to checking only $22,773$ distinct cases.

Next we note that 2-tree CFN mixtures of six leaf trees have the 
\emph{expected dimension} \cite[Proposition 5.5]{dimGpBasedMix19}. This means that for every pair of six leaf trees $\{T_1, T_2 \}$, the variety $V_{T_1} \ast V_{T_2}$ satisfies
\[
\dim(V_{T_1} \ast V_{T_2}) = \dim(V_{T_1}) + \dim(V_{T_2}) + 1 = 19
\]
which implies that join varieties of this form have the same dimension regardless 
of the tree parameters. This means we can use the specialized version of 
the algorithm for models of the same dimension. Furthermore, as a result of the Fourier transform, 
the parameters that correspond to the identity element in $\zz_2$ are actually identically 1. 
By removing them we are able to greatly reduce the number of variables 
which significantly speeds up the symbolic computation step required for verification of certificates. 
Our algorithm is able to produce a certificate for all but one of the 22,773 cases. 
These certificates are stored in the file \emph{certsCFN} and code to verify that they are certificates can be found in the Mathematica file \emph{CFN\_6Leaf\_Mixtures.nb}. The certificates were originally found using the function \emph{matroidSeparate} in the Mathematica package \emph{PhylogeneticMatroids.m} which is our implementation of Algorithm \ref{alg:certID}. 

The case that the algorithm fails to find a certificate has tree parameters $\{T_1,T_2\}$ and $\{S_1,S_2\}$ of the following form up to symmetry
\begin{equation}
\label{eqn:badTreeCase}
\begin{aligned}
T_1 &= \{12|3456, 123|456, 1234|56 \} \\ 
T_2 &= \{23|1456, 123|456, 1236|45\} \\
S_1 &= \{12|3456, 123|456, 1236|45 \} \\ 
S_2 &= \{23|1456, 123|456, 1234|56\}. 
\end{aligned}
\end{equation}

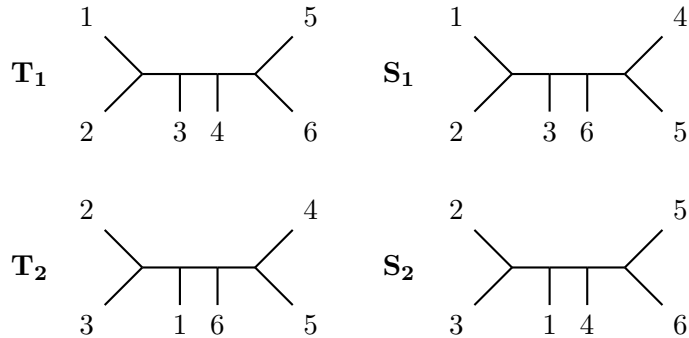
\begin{figure}
    \centering
    \begin{subfigure}[b]{0.3\linewidth}
        \centering
        \begin{tikzpicture}[scale = .5, thick]
        %\draw [help lines] (0,0) grid (5,5);
        \draw (0,4)--(1,3);
        \draw (0,2)--(1,3);
        \draw (1,3)--(2,3);
        \draw (2,3)--(3,3);
        \draw (2,3)--(2,2);
        \draw (3,3)--(4,3);
        \draw (3,3)--(3,2);
        \draw (4,3)--(5,4);
        \draw (4,3)--(5,2);
        
        \draw (0,4) node[above left]{$1$};
        \draw (0,2) node[below left]{$2$};
        \draw (2,2) node[below]{$3$};
        \draw (3,2) node[below]{$4$};
        \draw (5,4) node[above right]{$5$};
        \draw (5,2) node[below right]{$6$};
        
        \draw (-2,3) node{$\mathbf{T_1}$};
        \end{tikzpicture}
    \end{subfigure}
    \begin{subfigure}[b]{0.3\linewidth}
        \centering
        \begin{tikzpicture}[scale = .5, thick]
        %\draw [help lines] (0,0) grid (5,5);
        \draw (0,4)--(1,3);
        \draw (0,2)--(1,3);
        \draw (1,3)--(2,3);
        \draw (2,3)--(3,3);
        \draw (2,3)--(2,2);
        \draw (3,3)--(4,3);
        \draw (3,3)--(3,2);
        \draw (4,3)--(5,4);
        \draw (4,3)--(5,2);
        
        \draw (0,4) node[above left]{$1$};
        \draw (0,2) node[below left]{$2$};
        \draw (2,2) node[below]{$3$};
        \draw (3,2) node[below]{$6$};
        \draw (5,4) node[above right]{$4$};
        \draw (5,2) node[below right]{$5$};
        \draw (-2,3) node{$\mathbf{S_1}$};
        \end{tikzpicture}
    \end{subfigure}
    
    \vspace{5mm}
    
    \begin{subfigure}[b]{0.3\linewidth}
        \centering
        \begin{tikzpicture}[scale = .5, thick]
        %\draw [help lines] (0,0) grid (5,5);
        \draw (0,4)--(1,3);
        \draw (0,2)--(1,3);
        \draw (1,3)--(2,3);
        \draw (2,3)--(3,3);
        \draw (2,3)--(2,2);
        \draw (3,3)--(4,3);
        \draw (3,3)--(3,2);
        \draw (4,3)--(5,4);
        \draw (4,3)--(5,2);
        
        \draw (0,4) node[above left]{$2$};
        \draw (0,2) node[below left]{$3$};
        \draw (2,2) node[below]{$1$};
        \draw (3,2) node[below]{$6$};
        \draw (5,4) node[above right]{$4$};
        \draw (5,2) node[below right]{$5$};
        
        \draw (-2,3) node{$\mathbf{T_2}$};
        \end{tikzpicture}
    \end{subfigure}
    \begin{subfigure}[b]{0.3\linewidth}
        \centering
        \begin{tikzpicture}[scale = .5, thick]
        %\draw [help lines] (0,0) grid (5,5);
        \draw (0,4)--(1,3);
        \draw (0,2)--(1,3);
        \draw (1,3)--(2,3);
        \draw (2,3)--(3,3);
        \draw (2,3)--(2,2);
        \draw (3,3)--(4,3);
        \draw (3,3)--(3,2);
        \draw (4,3)--(5,4);
        \draw (4,3)--(5,2);
        
        \draw (0,4) node[above left]{$2$};
        \draw (0,2) node[below left]{$3$};
        \draw (2,2) node[below]{$1$};
        \draw (3,2) node[below]{$4$};
        \draw (5,4) node[above right]{$5$};
        \draw (5,2) node[below right]{$6$};
        \draw (-2,3) node{$\mathbf{S_2}$};
        \end{tikzpicture}
    \end{subfigure}

    \caption{The two pairs of trees described by Equation (\ref{eqn:badTreeCase}) which have the same sets of splits when combined.}
    \label{fig:badTreeCase}
\end{figure}

In this case, we were able to find invariants that separate the join varieties by computing a degree-bounded Gr\"{o}bner basis for the varieties $V_{T_1} \ast V_{T_2}$ and
$V_{S_1} \ast V_{S_2}$ up to degree 4. These computations can be found in \emph{CFN\_last\_pair.m2}. This separates all pairs up to symmetry and so the tree parameters are identifiable for six leaf trees. 
\end{proof}

A natural question to ask is why the more typical Gr\"{o}bner basis algorithm that was employed to deal with the last case could not simply be used to deal with every case. This is because even the degree bounded Gr\"{o}bner basis calculation can take a significant amount of time compared to our algorithm. For instance if we take
\begin{equation*}
\begin{aligned}
T_1 &= \{12|3456, 125|346, 1256|34 \} \\ 
T_2 &= \{13|2456, 134|256, 1346|25\} \\
S_1 &= \{12|3456, 126|345, 1246|35 \} \\ 
S_2 &= \{15|2346, 156|234, 1356|24\}. 
\end{aligned}
\end{equation*}
then computing a Gr\"{o}bner basis up to degree four took slightly over eight minutes whereas our algorithm took slightly under four minutes in this case. This computational difference is quite significant given the large number of cases that need to be dealt with.

For the main computation we ran Algorithm \ref{alg:certID} on each case in batches of about 1000 cases over a month. We do not have a precise time estimate for how long this computation took but in the 22,772 cases where the algorithm worked, it seems to find potential certificates quickly and most of the computation time came from computing matrix rank over the fraction field $k(\theta)$. On the other hand, using Algorithm 3.6 with tolerance $\epsilon = 10^{-10}$, we can find a list of certificates in slightly over 19 minutes running the algorithm in parallel on a laptop with four processors.

The identifiability of the tree parameters for 2-tree K3P mixtures actually follows almost immediately from the CFN case but we are also able to use our method along with some results from \cite{mixJCID10} to get identifiability results for smaller trees in the K3P case. 

\begin{thm}
\label{thm:K3P_ID}
The tree parameters of the 2-tree K3P mixture model are generically identifiable for trees with at least four leaves.
\end{thm}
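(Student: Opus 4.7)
The plan is to split the argument by the number of leaves. For $n \geq 6$ the theorem should follow almost mechanically from Theorem \ref{thm:CFN_ID} plus the Six-To-Infinity Theorem, while for $n = 4$ and $n = 5$ a direct application of Algorithm \ref{alg:certID} (supplemented by the linear invariants of \cite{mixJCID10}) should suffice.

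For $n \geq 6$, I would work in Fourier coordinates and exploit the group inclusion $\iota : \zz_2 \hookrightarrow \zz_2 \times \zz_2$ sending $g \mapsto (g,0)$, which identifies the $2^n$ CFN Fourier coordinates with a subset of the $4^n$ K3P Fourier coordinates. A direct inspection of the monomial parameterization from Section \ref{sec:gpBasedModels} shows that the coordinate projection of the K3P variety onto these ``CFN coordinates'' recovers the CFN variety, after identifying Fourier parameters $a_{(g,0)}^{A|B}$ on the K3P side with $a_g^{A|B}$ on the CFN side; the remaining K3P parameters simply do not appear in the restricted monomials. Because coordinate projection commutes with the join operation (up to Zariski closure), the same identification holds for 2-tree mixtures. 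Consequently, a subset $S$ contained in the CFN coordinates is independent in the K3P mixture matroid if and only if it is independent in the CFN mixture matroid, so each of the CFN certificates produced in the proof of Theorem \ref{thm:CFN_ID} lifts verbatim to a K3P certificate. This gives K3P identifiability for six-leaf trees, and the Six-To-Infinity Theorem (whose statement is insensitive to the underlying group-based model) then extends the result to all $n \geq 6$.

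For $n = 4$ and $n = 5$ the CFN result is unavailable; in fact the Matsen-Steel counterexample \cite{matsenCFNID07} shows non-identifiability already at $n = 4$, so the additional K3P parameters must be used in an essential way. I would enumerate the pairs of distinct 2-multisets of $n$-leaf trees modulo the $S_n$ action on leaf labels and run Algorithm \ref{alg:certID} on each, producing separating coordinate sets whenever possible and invoking Proposition \ref{prop:matID}. For any pair on which the randomized matroid search fails, the linear invariants constructed in \cite{mixJCID10} for K2P 2-tree mixtures can be re-examined as candidate linear forms in the K3P Fourier coordinates; whenever such a form lies in the K3P vanishing ideal of one mixture but not the other it yields a certificate via Proposition \ref{prop:idealID}.

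The main obstacle will be the $n = 4$ case, where the extra K3P parameters must distinguish mixtures that CFN alone cannot separate and where the comparatively coarse matroid invariant is most likely to be blind to the relevant distinction (as illustrated in the final $n = 6$ pair of Theorem \ref{thm:CFN_ID}). I expect Algorithm \ref{alg:certID} to succeed on the majority of pairs at $n = 4, 5$, with the K2P-derived linear invariants from \cite{mixJCID10} serving as a principled fallback for any residual pairs, keeping all computations well within the reach of the supplementary implementation.
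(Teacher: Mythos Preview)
Your overall architecture matches the paper's: reduce $n\geq 6$ to the CFN result via the coordinate projection onto a $\zz_2$-subgroup, and handle small $n$ separately. Two points deserve attention.

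\textbf{The exceptional six-leaf pair.} Your $n\geq 6$ argument is phrased as ``each of the CFN certificates produced in the proof of Theorem~\ref{thm:CFN_ID} lifts verbatim to a K3P certificate.'' The equivalence you state (for $S$ inside the CFN coordinates, $S$ is independent in the K3P mixture matroid iff it is independent in the CFN mixture matroid) is correct, since $I^{CFN}=I^{K3P}\cap\cc[\text{CFN coords}]$. But recall that in the proof of Theorem~\ref{thm:CFN_ID} one of the $22{,}773$ six-leaf cases was \emph{not} resolved by a matroid certificate at all; it required a degree-$4$ Gr\"obner computation to exhibit a separating invariant. For that pair there is no CFN matroid certificate to lift, so your sentence does not yet cover it. The paper sidesteps this by arguing at the level of varieties rather than matroids: since $\pi(V_{T_1}^{K3P}\ast V_{T_2}^{K3P})=V_{T_1}^{CFN}\ast V_{T_2}^{CFN}$ and the CFN join varieties are already known to be unequal (by whatever method), the K3P join varieties are unequal as well; equal dimension then gives the lower-dimensional intersection. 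Equivalently, the separating polynomial found for the bad CFN pair lies in $I^{CFN}\subset I^{K3P}$ and separates the K3P mixtures too. Either formulation closes the gap with no extra work; you just need to say it.

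\textbf{The five-leaf case.} The paper does not run Algorithm~\ref{alg:certID} at $n=5$ at all. Instead it invokes Proposition~7 of \cite{mixJCID10}, which guarantees that any two distinct $2$-multisets of five-leaf trees admit a $4$-element leaf subset $K$ on which the restricted multisets remain distinct, together with Lemma~3 of \cite{mixJCID10}, which says non-containment of the restricted join varieties implies non-containment of the original ones. This reduces $n=5$ cleanly to the already-verified $n=4$ case. Your proposal to enumerate five-leaf pairs and run the algorithm directly, with the K2P linear invariants of \cite{mixJCID10} as a fallback, would presumably succeed but is both more computation and more speculative than necessary; the restriction argument is shorter and requires no additional verification.
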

\begin{proof}
The generic identifiability of the tree parameters of 2-tree K3P mixtures for trees with at least six leaves follows immediately from Theorem \ref{thm:CFN_ID}. This is because the CFN model can be obtained from the K3P model via a coordinate projection. More explicitly, let $\{T_1, T_2\}$ and $\{S_1, S_2\}$ be two distinct multisets of six leaf trees
and 
suppose $V_{T_1}^{K3P} \ast V_{T_2}^{K3P}$ and $V_{S_1}^{K3P} \ast V_{S_2}^{K3P}$ are the join varieties associated to the K3P model. Theorem \ref{thm:CFN_ID} guarantees that the same varieties associated the CFN model satisfy
\[
V_{T_1}^{CFN} \ast V_{T_2}^{CFN} \neq V_{S_1}^{CFN} \ast V_{S_2}^{CFN}.
\]
Let $G$ be a subgroup of $\zz_2 \times \zz_2$ isomorphic to $\zz_2$, and $\pi : \cc^{4^n} \to \cc^{2^n}$ be the linear map obtained by projecting onto the coordinates of $\cc^{4^n}$ indexed only by the elements of $G$. Then for any tree $T$, $\pi(V_T^{K3P}) = V_T^{CFN}$.
For example, let $G = \langle (1,0) \rangle \subseteq \zz_2 \times \zz_2$ and let $\pi(V_T^{K3P})$ be the projection onto these coordinates. Then $\pi(V_T^{K3P}) \subseteq \cc^{2^n}$ is parameterized by the map
\[
q_{g_1, \ldots g_n} =
\begin{cases}
\prod_{A|B \in \Sigma(T)} a_{\sum_{i \in A}g_i}^{A|B} & \mbox{, if } \sum_{i \in [n]}g_i = 0 \\ 
0 & \mbox{, otherwise}
\end{cases}
\]
since $G \cong \zz_2$, we can simply replace every occurence of $(1,0) \in \zz_2 \times \zz_2$ in this map with $1 \in \zz_2$ without changing the map at all. This replacement gives the parameterization of the variety $V_T^{CFN}$ and so we see that the two varieties are parameterized by the same map. Since linear maps commute with taking joins of varieties, it holds that 
\[
\pi \left( V_{T_1}^{K3P} \ast V_{T_2}^{K3P} \right) = V_{T_1}^{CFN} \ast V_{T_2}^{CFN}.
\]
This together with the inequality of the 2-tree CFN join varieties implies the inequality of the 2-tree K3P join varieties and so the generic identifiability of the tree parameters for trees with at least six leaves follows. 

For trees with four leaves we once again apply Algorithm \ref{alg:certID} to all distinct 2-multisets of four leaf trees up to symmetry. In all four cases the algorithm quickly finds a certificate numerically but in the last case it seems to only find certificates that are very large. When the potential certificate sets are large, the verification step can still be time consuming so in this case we instead constructed a smaller certificate that we verified symbolically. The five leaf case then follows from Proposition 7 of \cite{mixJCID10} which guarantees that if $\{T_1, T_2 \}$ and $\{S_1, S_2 \}$ are distinct multisets of five leaf trees, then there exists a 4-subset $K \subseteq [5]$ of the leaves such that restricting each tree to $K$ gives two distinct multisets of four leaf trees or in symbols $\{T_1|_K, T_2|_K \} \neq \{S_1|_K, S_2|_K \}$. The result then follows from Lemma 3 of \cite{mixJCID10} which shows that if $V_{T_1|_K} \ast V_{T_2|_K} \not\subseteq V_{S_1|_K} \ast V_{S_2|_K}$ then
$V_{T_1} \ast V_{T_2} \not\subseteq V_{S_1} \ast V_{S_2}$. 
\end{proof}

In the proof of Theorem \ref{thm:CFN_ID}, there was a single pair of trees up to symmetry 
that our matroid-based algorithm failed to find a certificate for. 
We also attempted to run the algorithm for the same pair of trees under the 
K3P model and also did not find a certificate but in both cases we know the corresponding ideals of phylogenetic invariants are not equal. As mentioned in section \ref{sec:algID}, 
it is possible for two different ideals to define the same matroid.  We conjecture
this to be the case in this instance.

\begin{conj}
Let $\{T_1, T_2\}$ and $\{S_1, S_2\}$ be the pairs of trees defined in Equation (\ref{eqn:badTreeCase}) and let $V_{T_1} \ast V_{T_2}$ and  $V_{S_1} \ast V_{S_2}$ be the associated CFN join varieties. Then 
\[
\mm(V_{T_1} \ast V_{T_2}) = \mm(V_{S_1} \ast V_{S_2}).
\]
\end{conj}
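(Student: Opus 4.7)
The plan is to reduce the conjecture to a finite linear-algebra verification via the Jacobian criterion of Proposition \ref{prop:nmLocus}. First, I would write down the Fourier parameterizations $\psi_{T_1,T_2}$ and $\psi_{S_1,S_2}$ explicitly as maps into $\cc^{32}$ (the space of nonzero Fourier coordinates for the CFN model on six leaves), each depending on $19$ continuous parameters. The essential combinatorial fact driving the conjecture is that the multiset unions $\Sigma(T_1)\cup\Sigma(T_2)$ and $\Sigma(S_1)\cup\Sigma(S_2)$ are identical: both consist of each trivial leaf-split with multiplicity two, together with $12|3456$, $123|456$ (counted twice), $1234|56$, $23|1456$, and $1236|45$. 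Passing from $\{T_1,T_2\}$ to $\{S_1,S_2\}$ amounts to exchanging the splits $1234|56$ and $1236|45$ between the two trees of the mixture while leaving this multiset unchanged.

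Next, I would compute the transposed Jacobians $J_T$ and $J_S$, both $19\times 32$ matrices, and evaluate them at a generic rational point. By Proposition \ref{prop:nmLocus}, the linear matroid of this specialization coincides with the symbolic matroid, so the conjecture is equivalent to showing that the two numerical $19\times 32$ matrices realize the same linear matroid on $[32]$. Since both matroids have rank $19$, equality is equivalent to sharing the same collection of bases, a finite check involving at most $\binom{32}{19}$ subsets (and in practice far fewer, using standard matroid-enumeration techniques). Exact arithmetic in characteristic zero makes this verification rigorous; one may alternatively invoke Corollary \ref{cor:matroidSZ} to upgrade a single numerical coincidence to a probabilistic certificate.

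The main obstacle is conceptual rather than computational. The authors have already shown in the proof of Theorem \ref{thm:CFN_ID} that $V_{T_1}\ast V_{T_2}$ and $V_{S_1}\ast V_{S_2}$ have distinct vanishing ideals, so a brute-force matroid comparison would confirm the conjecture without explaining why it holds. A satisfying proof should exhibit a combinatorial symmetry between the two Jacobians—perhaps a relabeling of the $19$ parameters together with a permutation of the $32$ Fourier coordinates, induced by the exchange $1234|56 \leftrightarrow 1236|45$, under which the support patterns of the two Jacobians agree. The hard part is that the pairing of monomials within each joined coordinate is precisely what distinguishes the two varieties, yet the conjecture asserts this distinguishing data is invisible to the algebraic matroid; making this invisibility precise is, I believe, the substantive mathematical content of any eventual structural proof.
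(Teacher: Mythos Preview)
The statement you are attempting to prove is stated in the paper as a \emph{conjecture}; there is no proof in the paper to compare your proposal against. What follows is an assessment of the proposal on its own terms.

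Your plan contains a genuine gap at the step where you evaluate both Jacobians at a single rational point and then declare that ``exact arithmetic in characteristic zero makes this verification rigorous.'' Proposition~\ref{prop:nmLocus} guarantees that the numerical matroid equals the symbolic one at a \emph{generic} point, but it provides no mechanism for certifying that any particular chosen point is generic. Evaluation at a point gives only a one-sided certificate: if a $19\times 19$ minor is nonzero at $r$, it is nonzero as a polynomial in $\theta$, so the corresponding set is symbolically independent; but if the minor vanishes at $r$, you cannot conclude it vanishes identically, since $r$ may lie in that minor's zero locus. Consequently, observing that the two numerical matroids at $r$ coincide shows only that each is contained in the corresponding symbolic matroid and that these lower bounds agree; it does not rule out a $19$-set that is symbolically a basis for exactly one of $\mm(V_{T_1}\ast V_{T_2})$, $\mm(V_{S_1}\ast V_{S_2})$ but happens to be dependent at $r$ for both. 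Corollary~\ref{cor:matroidSZ}, which you cite, makes exactly this point: it yields a probabilistic certificate, not a proof. Turning the finite check into a rigorous argument would require carrying out every rank computation over $k(\theta)$, i.e.\ roughly $\binom{32}{19}\approx 3.5\times 10^{8}$ symbolic $19\times 19$ determinants, which is neither the computation you described nor a feasible one.

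Your final paragraph correctly locates where an actual proof would have to come from: an explicit correspondence between the two parameter sets (and possibly the Fourier coordinates) that carries one Jacobian to the other, or at least preserves all column-rank data. You note that $\Sigma(T_1)\cup\Sigma(T_2)=\Sigma(S_1)\cup\Sigma(S_2)$ as multisets and that the two pairs differ by swapping the splits $1234|56$ and $1236|45$ between trees, but you do not produce such a map or show it preserves ranks. That missing construction is precisely the content of the conjecture; absent it, the statement remains open.
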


%%%%%%%%%%%%%%%%%%%%%%%%%%%%%%%%%%%%%%%%%%%%%%%%%%%%%%%%%
%%%%%%%%%%%%%%%%%%%%%%%%%%%%%%%%%%%%%%%%%%%%%%%%%%%%%%%%%
%%%%%%%%%%%%%%%%%%%%%%%%%%%%%%%%%%%%%%%%%%%%%%%%%%%%%%%%%
%%%%%%%%%%%%%%%%%%%%%%%%%%%%%%%%%%%%%%%%%%%%%%%%%%%%%%%%%

\section{Identifiability for Phylogenetic Networks}
\label{sec:phyloNetID}
Recently phylogenetic network models have emerged as a tool to account for 
events in the evolutionary history of organisms that trees cannot represent.
Non-treelike evolutionary processes include 
horizontal gene transfer and hybridization \cite{geneTreeMadison97,horizontalSyvanen94}. 
Similar to the case of trees, an important question to address is the identifiability of the 
network parameter in network-based phylogenetic models. 
Gross and Long showed in \cite{phyloNetId18} that the network parameter is
identifiable in \emph{large-cycle} JC network models by explicitly computing the associated ideals. 
We will show how Algorithm \ref{alg:certID} can be used to extend their results to large-cycle 
K2P and K3P network models.

%%%
%%%
%%%

\subsection{Preliminaries on Phylogenetic Networks}
In this section we provide some background on phylogenetic networks.  
Specifically we describe the basic structure of a phylogenetic network 
and the parameterization associated to a network. 
The following notation and terminology is adapted from \cite{phyloNetId18}. 

\begin{defn}
A phylogenetic network $N$ on leaf set $[n]$ is a rooted acyclic digraph with no edges in parallel and satisfying the following properties:
\begin{enumerate}
    \item the root has out-degree two;
    \item a vertex with out-degree zero has in-degree one, and the set of vertices with out-degree zero is $[n]$;
    \item all other vertices have either in-degree one and out-degree two, or in-degree two and out-degree one. 
\end{enumerate}
\end{defn}

Vertices with in-degree one and out-degree two are called \emph{tree vertices} and vertices with in-degree two and out-degree one are called \emph{reticulation vertices}. Edges directed into a reticulation vertex are called \emph{reticulation edges} and all other edges are called \emph{tree edges}. In this paper we will be focusing on group-based models on phylogenetic networks which are all \emph{time-reversible}. This means that it is impossible to identify the location of the root under this model so we are only interested in the underlying \emph{semi-directed} network structure of a phylogenetic network. The underlying semi-directed network is obtained from a phylogenetic network by suppressing the root and undirecting all tree edges in the network. 

As the number of reticulation vertices in the network increases, the parameterization of the model becomes increasingly complicated, thus it is natural to first focus on networks with only one reticulation vertex 
\cite{phyloNetId18}.

\begin{defn}
A cycle-network is a semi-directed network with one reticulation vertex. A $k$-cycle network is a cycle-network with cycle size $k$. Every $k$-cycle network can be built by attaching a binary tree with at least one leaf to every vertex of a $k$-cycle and specifying a single vertex of the cycle as the reticulation vertex. 
\end{defn}

\begin{ex}
The networks pictured in Figure \ref{fig:netL4C4} are both examples of  4-cycle networks. The reticulation vertex for the first network is the vertex where the leaf labelled 1 is attached. The two dotted edges adjacent to it are the \emph{reticulation edges}. Removing either of these edges gives an unrooted four leaf tree. 
\end{ex}

\begin{figure}
    \begin{subfigure}[b]{0.3\linewidth}
        \centering
        \begin{tikzpicture}[scale = .5, thick]
        %\draw [help lines] (0,0) grid (6,6);
        \draw [dashed] (2,2)--(4,2);
        \draw (4,2)--(4,4);
        \draw (4,4)--(2,4);
        \draw [dashed] (2,4)--(2,2);
        
        \draw (2,2)--(1,1);
        \draw (4,2)--(5,1);
        \draw (4,4)--(5,5);
        \draw (2,4)--(1,5);
        
        \draw (1,1) node[below]{$1$};
        \draw (5,1) node[below]{$2$};
        \draw (5,5) node[above]{$3$};
        \draw (1,5) node[above]{$4$};
        \end{tikzpicture}
    \end{subfigure}
    \begin{subfigure}[b]{0.3\linewidth}
        \centering
        \begin{tikzpicture}[scale = .5, thick]
        %\draw [help lines] (0,0) grid (6,6);
        \draw [dashed] (2,2)--(4,2);
        \draw (4,2)--(4,4);
        \draw (4,4)--(2,4);
        \draw [dashed] (2,4)--(2,2);
        
        \draw (2,2)--(1,1);
        \draw (4,2)--(5,1);
        \draw (4,4)--(5,5);
        \draw (2,4)--(1,5);
        
        \draw (1,1) node[below]{$1$};
        \draw (5,1) node[below]{$2$};
        \draw (5,5) node[above]{$4$};
        \draw (1,5) node[above]{$3$};
        \end{tikzpicture}
    \end{subfigure}
    \caption{4-cycle networks with four leaves. Under the CFN model these two networks have different ideals but the same matroid.}
    \label{fig:netL4C4}
\end{figure}
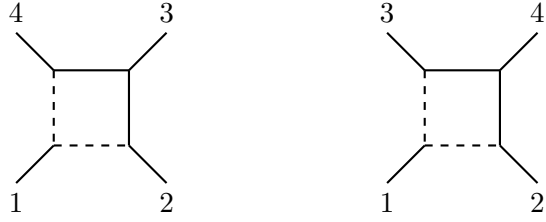

Group-based models on phylogenetic networks are quite similar to mixture models but with a restricted parameter space. The Fourier transform also applies to these network models so we let $G$ be either $\zz_2$ or $\zz_2 \times \zz_2$. Also let $N$ be a $k$-cycle network, and to each edge $e \in N$ and $g \in G$ associate a parameter $a_g^e$. Take $e_1$ and $e_2$ to be the reticulation edges and $T_1$ and $T_2$ to be the unrooted trees obtained from $N$ by removing either $e_1$ or $e_2$ respectively. Then the generic group based model associated to $G$ with network parameter $N$ is the image of the map
\[
\psi_N : \Theta_N \times [0,1] \to \Delta_{|G|^n - 1}
\]
given by
\[
\psi_N = \lambda \psi_{T_1} + (1-\lambda)\psi_{T_2}
\]
where $\psi_{T_i}$ is the map in the Fourier coordinates described in section \ref{sec:gpBasedModels} using the parameters that $T_i$ inherits from $N$. The main difference between these models and mixture models is that the parameters on each tree are not independent and actually overlap considerably. Similarly to the tree case, we let $V_N$ denote the Zariski closure of the image of $\psi_N$ and $I_N$ the vanishing ideal of $V_N$. 

\begin{ex}
Let $N$ be the network pictured in Figure \ref{fig:fourCyWithTrees}. The trees $T_1$ and $T_2$ that are also pictured in Figure \ref{fig:fourCyWithTrees} are obtained from $N$ by deleting the reticulation edges $e_5$ and $e_8$ and respectively. We denote the Fourier parameter corresponding to the edge $e_i$ and group element $g_j$ by $a_{g_j}^i$. The parameterization $\psi_N$ in the Fourier coordinates is
\[
q_{g_1, g_2, g_3, g_4} = \begin{cases}
a_{g_1}^1 a_{g_2}^2 a_{g_3}^3 a_{g_4}^4 a_{g_2}^6 a_{g_1 + g_4}^7 a_{g_1}^8 + 
a_{g_1}^1 a_{g_2}^2 a_{g_3}^3 a_{g_4}^4 a_{g_1}^5 a_{g_1 + g_2}^6 a_{g_4}^7
& \mbox{ if } \sum_{i \in [4]}g_i = 0 \\ 
0 & \mbox{ otherwise}
\end{cases}
\]
The first term in the above parameterization comes from the parameterization $\psi_{T_1}$ in the Fourier coordinates and the second term comes from $\psi_{T_2}$.
\end{ex}

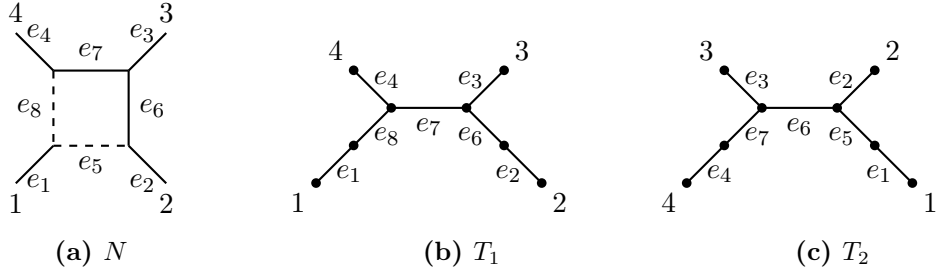
\begin{figure}
    \centering
    \begin{subfigure}[b]{0.3\linewidth}
        \centering
        \begin{tikzpicture}[scale = .5, thick]
        %\draw [help lines] (0,0) grid (6,6);
        \draw [dashed] (2,2)--(4,2);
        \draw (4,2)--(4,4);
        \draw (4,4)--(2,4);
        \draw [dashed] (2,4)--(2,2);
        
        \draw (2,2)--(1,1);
        \draw (4,2)--(5,1);
        \draw (4,4)--(5,5);
        \draw (2,4)--(1,5);
        
        \draw (1,1) node[below]{$1$};
        \draw (5,1) node[below]{$2$};
        \draw (5,5) node[above]{$3$};
        \draw (1,5) node[above]{$4$};
        
        \draw (1,1) node[right]{$e_1$};
        \draw (5,1) node[left]{$e_2$};
        \draw (5,5) node[left]{$e_3$};
        \draw (1,5) node[right]{$e_4$};
        
        \draw (3,2) node[below]{$e_5$};
        \draw (4,3) node[right]{$e_6$};
        \draw (3,4) node[above]{$e_7$};
        \draw (2,3) node[left]{$e_8$};
        \end{tikzpicture}
        \caption{$N$}
    \end{subfigure}
    \begin{subfigure}[b]{0.3\linewidth}
        \begin{tikzpicture}[scale = .5, thick]
        %\draw [help lines] (0,0) grid (6,4);
        
        \draw (0,0)--(2,2);
        \draw (2,2)--(4,2);
        \draw (4,2)--(6,0);
        \draw (2,2)--(1,3);
        \draw (4,2)--(5,3);
        
        \draw [fill] (0,0) circle [radius = .1];
        \draw [fill] (1,1) circle [radius = .1];
        \draw [fill] (2,2) circle [radius = .1];
        \draw [fill] (1,3) circle [radius = .1];
        \draw [fill] (4,2) circle [radius = .1];
        \draw [fill] (5,3) circle [radius = .1];
        \draw [fill] (5,1) circle [radius = .1];
        \draw [fill] (6,0) circle [radius = .1];
        
        \draw (0,0) node[below left] {$1$};
        \draw (1,3) node[above left] {$4$};
        \draw (5,3) node[above right] {$3$};
        \draw (6,0) node[below right] {$2$};
        
        \draw (.25,.25) node[right] {$e_1$};
        \draw (1.25,2.75) node[right] {$e_4$};
        \draw (4.75,2.75) node[left] {$e_3$};
        \draw (5.75,.25) node[left] {$e_2$};
        \draw (1.25,1.25) node[right] {$e_8$};
        \draw (3,2) node[below] {$e_7$};
        \draw (4.75,1.25) node[left] {$e_6$};
        \end{tikzpicture}
        \caption{$T_1$}
    \end{subfigure}
    \begin{subfigure}[b]{0.3\linewidth}
        \begin{tikzpicture}[scale = .5, thick]
        %\draw [help lines] (0,0) grid (6,4);
        
        \draw (0,0)--(2,2);
        \draw (2,2)--(4,2);
        \draw (4,2)--(6,0);
        \draw (2,2)--(1,3);
        \draw (4,2)--(5,3);
        
        \draw [fill] (0,0) circle [radius = .1];
        \draw [fill] (1,1) circle [radius = .1];
        \draw [fill] (2,2) circle [radius = .1];
        \draw [fill] (1,3) circle [radius = .1];
        \draw [fill] (4,2) circle [radius = .1];
        \draw [fill] (5,3) circle [radius = .1];
        \draw [fill] (5,1) circle [radius = .1];
        \draw [fill] (6,0) circle [radius = .1];
        
        \draw (0,0) node[below left] {$4$};
        \draw (1,3) node[above left] {$3$};
        \draw (5,3) node[above right] {$2$};
        \draw (6,0) node[below right] {$1$};
        
        \draw (.25,.25) node[right] {$e_4$};
        \draw (1.25,2.75) node[right] {$e_3$};
        \draw (4.75,2.75) node[left] {$e_2$};
        \draw (5.75,.25) node[left] {$e_1$};
        \draw (1.25,1.25) node[right] {$e_7$};
        \draw (3,2) node[below] {$e_6$};
        \draw (4.75,1.25) node[left] {$e_5$};
        \end{tikzpicture}
        \caption{$T_2$}
    \end{subfigure}
    \caption{A 4 leaf 4-cycle network $N$ and the two trees $T_1$ and $T_2$ that are obtained by deleting the reticulation edges $e_5$ and $e_8$ respectively.}
    \label{fig:fourCyWithTrees}
\end{figure}

It is not hard to see from the structure of the parameterization that if $T$ is one of the subtrees obtained from $N$ by deleting a reticulation edge of $N$, then $V_T \subseteq V_N$. This means that it will not be possible to separate tree models from other network models, and thus the set of cycle-networks will not be identifiable. This leads to study the following class of networks that was introduced in
\cite{phyloNetId18}.

\begin{defn}
The set of large-cycle networks is the collection of all $k$-cycle networks with $k \geq 4$.  
\end{defn}

\begin{defn}
The large-cycle network parameter of a phylogenetic network model is generically identifiable if for every pair of $n$-leaf large-cycle networks $N_1$ and $N_2$,
\[
\dim(V_{N_1} \cap V_{N_2}) < min(\dim(V_{N_1}), \dim(V_{N_2}))
\]
\end{defn}

\subsection{Identifiability of Network parameters for Large-Cycle K2P and K3P Networks}
In this section we describe the proof strategy that Gross and Long used to prove the generic identifiability of the network parameter for large-cycle JC networks. As they remarked in \cite{phyloNetId18}, the combinatorial arguments they make to prove the final result still apply but the necessary computational results are more difficult since K2P and K3P are higher dimensional models with more parameters. 

Let $M$ be a phylogenetic model for which the tree parameter is generically identifiable. Gross and Long showed in \cite[Section 4.2]{phyloNetId18} that if $M$ also satisfies the following three lemmas, then the large-cycle network parameter is identifiable for $M$. They prove this by finding subsets of the leaves of the networks that when restricted to, yield a situation that can be addressed with one of the lemmas or the generic identifiability of the tree parameter. In \cite{phyloNetId18}, they proved the same three results for the JC model by computing a degree-bounded Gr\"{o}bner basis for $I_N$ and then verifying that the degree-bounded basis generates a prime ideal of the correct dimension, thus it must be a Gr\"{o}bner basis for the prime ideal $I_N$. This computation becomes more difficult though as the number of parameters in the model increases. We instead use Algorithm \ref{alg:certID} to prove these lemmas for the K2P and K3P models. For the remainder of this paper we let $M$ be either K2P or K3P and denote the variety associated to the network $N$ under the model $M$ with $V_N^M$. 

\begin{lemma}
\label{lemma:cycleDim}
Let $N_1$ be a $k_1$-cycle network and $N_2$ be a $k_2$ cycle network. If $2 \leq k_1 < k_2 \leq 4$, then $V_{N_2}^M \not\subseteq V_{N_1}^M$.
\end{lemma}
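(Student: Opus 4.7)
The plan is to combine Algorithm \ref{alg:certID} with a restriction argument to reduce to a small, enumerable collection of base cases. The key observation is that if $V_{N_2}^M \subseteq V_{N_1}^M$ then $\ii(V_{N_1}^M) \subseteq \ii(V_{N_2}^M)$, so by Proposition \ref{prop:idealMat} every independent set of $\mm(V_{N_2}^M)$ is also independent in $\mm(V_{N_1}^M)$. Hence, to prove non-containment it suffices to exhibit a coordinate subset $S \in \mm(V_{N_2}^M) \setminus \mm(V_{N_1}^M)$. This is a weaker requirement than the hypothesis of Proposition \ref{prop:matID}, since only one direction of non-containment is needed rather than the stronger dimensional statement.

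To make the problem finite, I would first reduce to minimal networks in which each vertex of the $k$-cycle supports exactly one leaf, so that the underlying network has exactly $k$ leaves. This uses the standard marginalization principle for discrete statistical models: projecting $V_N^M$ onto the Fourier coordinates corresponding to a leaf subset $K \subseteq [n]$ yields the variety $V_{N|_K}^M$ of the appropriately restricted network. Hence, if $V_{N_2|_K}^M \not\subseteq V_{N_1|_K}^M$ for some $K$, the same non-containment holds for the original networks. This kind of restriction lemma is analogous to Lemma 3 of \cite{mixJCID10}, which was used in the proof of Theorem \ref{thm:K3P_ID}. The reduction leaves only the three pairs $(k_1, k_2) \in \{(2,3), (2,4), (3,4)\}$ to verify, on networks with at most four leaves.

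For each of these base cases, I would enumerate all distinct pairs of $k_1$-cycle and $k_2$-cycle networks on a common small leaf set, modulo the permutation action on leaf labels, and compute the Jacobians of the Fourier parameterizations $\psi_{N_1}$ and $\psi_{N_2}$ for both the K2P and K3P models. Running Algorithm \ref{alg:certID} on each pair should quickly produce a numerical candidate $S$, which can then be verified symbolically over $k(\theta)$. Because the Fourier parameterizations are binomial and the number of parameters in these minimal cases is small, both the randomized search and the symbolic verification step should be tractable, avoiding the Gr\"obner basis computations used in \cite{phyloNetId18}.

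The main obstacle I expect is the restriction step rather than the algorithmic step. One must choose the leaf subset $K$ carefully so that the restrictions $N_1|_K$ and $N_2|_K$ still have cycles of sizes $k_1$ and $k_2$ respectively; stripping out leaves attached to a cycle vertex can collapse that vertex and shorten the cycle, which would defeat the purpose of the reduction. A careful combinatorial argument about which leaves to retain from each subtree attached to the cycle is needed. Once the reduction is in place, the remaining computational verification is a direct application of Algorithm \ref{alg:certID} on a short finite list of cases.
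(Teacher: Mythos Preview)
Your proposal is correct in principle, but it takes a considerably more elaborate route than the paper does. The paper's proof is a one-line dimension argument: it simply computes $\dim(V_{N_i}^M)$ as the rank of the Jacobian $J(\psi_{N_i}^M)$ over $k(\theta)$ and observes that $\dim(V_{N_2}^M) > \dim(V_{N_1}^M)$ in every relevant case, which immediately forces $V_{N_2}^M \not\subseteq V_{N_1}^M$. In matroid language this is the degenerate situation where \emph{any} basis of $\mm(V_{N_2}^M)$ serves as your certificate $S$, since its cardinality already exceeds the rank of $\mm(V_{N_1}^M)$; no search via Algorithm~\ref{alg:certID} is needed.

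Two further points of comparison. First, your restriction/marginalization step, while sound, is redundant here: as the paper notes just before stating these lemmas, the combinatorial reduction to small base networks is already carried out in \cite{phyloNetId18}, and Lemmas~\ref{lemma:cycleDim}--\ref{lemma:sunletID} are precisely the finite computational inputs that plug into that framework. So the lemma is implicitly only about the minimal networks, and you would be re-deriving part of Gross and Long's argument. Second, what your approach \emph{would} buy is robustness in situations where the dimensions happen to coincide; that is exactly why Algorithm~\ref{alg:certID} is needed for Lemmas~\ref{lemma:fourCyID} and~\ref{lemma:sunletID}, where the varieties are equidimensional. But for Lemma~\ref{lemma:cycleDim} the strict dimension gap makes the heavier machinery unnecessary.
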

\begin{proof}
We prove this by explicitly computing dimensions of the associated varieties. If $V_N^M$, is a network variety parameterized by $\psi_N^M$, then the dimension of $V_N^M$ can be computed by calculating the rank of the Jacobian of $\psi_N^M$ over the fraction field $k(\theta)$. In each case, we find that $\dim(V_{N_2}) > \dim(V_{N_1})$ which implies $V_{N_2}^M \not\subseteq V_{N_1}^M$. These computations can be found in the Mathematica files 
\emph{K2P\_Networks.nb} and \emph{K3P\_Networks.nb}. 
\end{proof}

\begin{lemma}
\label{lemma:fourCyID}
Let $N_1$ and $N_2$ be distinct 4-leaf 4-cycle networks. Then $V_{N_2}^M \not \subseteq V_{N_1}^M$ and $V_{N_1}^M \not \subseteq V_{N_2}^M$.
\end{lemma}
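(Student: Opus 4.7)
The plan is to handle this lemma by running Algorithm \ref{alg:certID} on each pair of distinct 4-leaf 4-cycle networks under $M \in \{K2P, K3P\}$, in direct analogy with the tree proofs of Theorems \ref{thm:CFN_ID} and \ref{thm:K3P_ID}. First I would enumerate the 4-leaf 4-cycle networks up to leaf-label symmetry. Each such network is determined by a cyclic ordering of the four leaves around the cycle together with a designated reticulation vertex, so after quotienting by the $S_4$-action on leaf labels and the dihedral symmetries of the underlying cycle one is left with only a short finite list of ordered pairs $\{N_1,N_2\}$ to verify.

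Next I would check that all 4-leaf 4-cycle varieties under $M$ have the same dimension by computing $\rank J(\psi_{N_i}^M)$ at a random specialization, which is the same Jacobian-rank calculation already used in Lemma \ref{lemma:cycleDim}. Because the varieties $V_{N_1}^M$ and $V_{N_2}^M$ are irreducible and of equal dimension, it suffices to prove $V_{N_1}^M \neq V_{N_2}^M$: mutual non-containment then follows automatically, since a proper closed irreducible subvariety of an irreducible variety must have strictly smaller dimension. By Proposition \ref{prop:matID} in the equidimensional case, the non-equality $V_{N_1}^M \neq V_{N_2}^M$ is witnessed as soon as Algorithm \ref{alg:certID} produces a coordinate subset $S$ that lies in one of $\mm(V_{N_1}^M)$, $\mm(V_{N_2}^M)$ but not the other. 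The algorithm would first hunt for such an $S$ numerically using $J(\psi_{N_i}^M)$ at a random rational specialization, and then verify it symbolically over $k(\theta)$, exactly as in the tree case.

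The main obstacle is precisely the phenomenon flagged in the caption of Figure \ref{fig:netL4C4}: under the CFN model those two 4-cycle networks have \emph{different} ideals but the \emph{same} algebraic matroid, so Algorithm \ref{alg:certID} is intrinsically incapable of separating them. Before declaring success one must therefore confirm that no such matroid collision occurs for any of the pairs compared under K2P or K3P. If some pair defeats the algorithm, the fallback plan is the same one used for the exceptional case in the proof of Theorem \ref{thm:CFN_ID}: compute a degree-bounded Gr\"obner basis for $\ii(V_{N_1}^M)$ and $\ii(V_{N_2}^M)$ and exhibit an explicit phylogenetic invariant lying in one but not the other, which by Proposition \ref{prop:idealID} (applied in the equidimensional case) suffices. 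The certificates and rank computations would be recorded in the supplementary files \emph{K2P\_Networks.nb} and \emph{K3P\_Networks.nb}.
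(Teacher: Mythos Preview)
Your proposal is correct and matches the paper's approach: the paper likewise observes that all 4-leaf 4-cycle varieties under $M$ have the same dimension, runs the equidimensional version of Algorithm \ref{alg:certID} (via \emph{matroidSeparate}) on every pair, and records the resulting certificates in \emph{K2P\_Networks.nb} and \emph{K3P\_Networks.nb}. Your caution about a possible matroid collision and the Gr\"obner-basis fallback is sensible, but in practice the paper reports that the algorithm succeeded on every pair for both K2P and K3P, so the fallback was never invoked.
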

\begin{proof}
In this case $V_{N_1}$ and $V_{N_2}$ both have the same dimension so we can run the specialized version of Algorithm \ref{alg:certID}. For both models we ran \emph{matroidSeparate} and were once again able to find a certificate separating each pair of 4-leaf 4-cycle networks.
These computations can also be found in the Mathematica files \emph{K2P\_Networks.nb} and \emph{K3P\_Networks.nb}.
\end{proof}

\begin{lemma}
\label{lemma:sunletID}
Let $N_1$ be either of the two 5-leaf 4-cycle networks pictured in Figure \ref{fig:netL4C5} and let $N_2$ be the 5-leaf 5-cycle network with reticulation edges directed toward the leaf-labelled by 1. Then $V_{N_1} \not \subseteq V_{N_2}$.
\end{lemma}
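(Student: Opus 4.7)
The plan is to apply Proposition \ref{prop:matID} in its one-sided form to produce, for each choice of $N_1$ among the two networks in Figure \ref{fig:netL4C5} and for each model $M \in \{\mbox{K2P}, \mbox{K3P}\}$, a subset $S$ of the Fourier coordinates that is independent in the coordinate projection matroid $\mm(V_{N_1}^M)$ but dependent in $\mm(V_{N_2}^M)$. By Proposition \ref{prop:idealMat}, such a set immediately yields a nonzero polynomial $f \in \ii(V_{N_2}^M) \cap k[x_i : i \in S]$ with $f \notin \ii(V_{N_1}^M)$, certifying $V_{N_1}^M \not\subseteq V_{N_2}^M$. Note that only one direction of non-containment is needed, so (unlike in Lemma \ref{lemma:fourCyID}) the search is biased toward a fixed side from the start.

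First I would set up the Fourier parameterizations $\psi_{N_1}^M$ and $\psi_{N_2}^M$ together with their transposed Jacobians, dropping the Fourier parameters indexed by the group identity (which are identically $1$) to keep the matrices as small as possible, exactly as in the proof of Lemma \ref{lemma:fourCyID}. Then I would run the numerical phase of Algorithm \ref{alg:certID}, sampling random subsets $S$ of Fourier coordinates of size at most $\dim(V_{N_1}^M)$ and testing the ranks of the corresponding Jacobian submatrices at random rational parameter values. By Proposition \ref{prop:nmLocus}, a candidate found numerically lifts to a true certificate over $k(\theta)$ with probability one, so an exact symbolic rank computation on the candidate finalizes the verification. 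These computations would be carried out by the function \emph{matroidSeparate} in the supplementary files \emph{K2P\_Networks.nb} and \emph{K3P\_Networks.nb}, running through both choices of $N_1$ for each of the two models.

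The main obstacle I anticipate is the cost of the symbolic rank computation at the end, since $V_{N_2}^M$ is a 5-leaf 5-cycle model whose Fourier parameterization already has many continuous parameters under K3P, and the Jacobian entries are polynomials of moderate degree in those parameters. As in Theorem \ref{thm:CFN_ID}, the practical remedy is to bias the random search toward small candidate sets so that the verifying minor computations remain tractable; if a small certificate is still elusive, one can fall back on Algorithm \ref{alg:certID_SZ} to obtain a certificate valid with probability at least $1 - \epsilon$, which suffices because the conclusion $V_{N_1}^M \not\subseteq V_{N_2}^M$ is a discrete statement that, once obtained numerically and then cross-checked for a few additional parameter specializations, is overwhelmingly likely to be correct.
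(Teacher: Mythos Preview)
Your proposal is correct and follows essentially the same approach as the paper: the paper also observes that $V_{N_1}$ and $V_{N_2}$ have the same dimension, then runs the specialized version of Algorithm \ref{alg:certID} via \emph{matroidSeparate} to produce a matroid certificate separating the two varieties, with the computations recorded in \emph{K2P\_Networks.nb} and \emph{K3P\_Networks.nb}. The only minor difference is that the paper explicitly invokes the equal-dimension observation to justify accepting a certificate from either side, whereas you fix the side from the outset; both are valid and lead to the same computation.
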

\begin{proof}
$V_{N_1}$ and $V_{N_2}$ once again have the same dimension in this case so we can again run the specialized version of Algorithm \ref{alg:certID} to show $V_{N_1} \not \subseteq V_{N_2}$ for both possible choices of $N_1$. As before, we ran \emph{matroidSeparate} to find certificates that show $V_{N_1} \not \subseteq V_{N_2}$.  These computations can also be found in the Mathematica files \emph{K2P\_Networks.nb} and \emph{K3P\_Networks.nb}.
\end{proof}

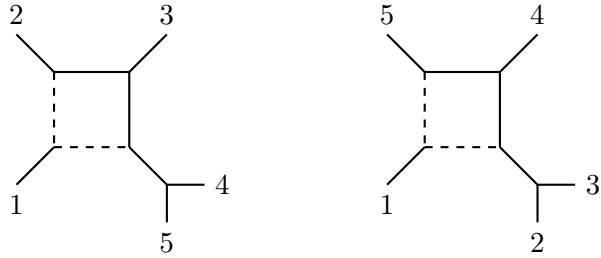
\begin{figure}
    \begin{subfigure}[b]{0.3\linewidth}
        \centering
        \begin{tikzpicture}[scale = .5, thick]
        %\draw [help lines] (0,0) grid (7,7);
        \draw [dashed] (2,2)--(4,2);
        \draw (4,2)--(4,4);
        \draw (4,4)--(2,4);
        \draw [dashed] (2,4)--(2,2);
        
        \draw (2,2)--(1,1);
        \draw (4,2)--(5,1);
        \draw (4,4)--(5,5);
        \draw (2,4)--(1,5);
        \draw (5,1)--(5,0);
        \draw (5,1)--(6,1);
        
        \draw (1,1) node[below]{$1$};
        \draw (5,5) node[above]{$3$};
        \draw (1,5) node[above]{$2$};
        \draw (5,0) node[below]{$5$};
        \draw (6,1) node[right]{$4$};
        
        \end{tikzpicture}
    \end{subfigure}
    \begin{subfigure}[b]{0.3\linewidth}
        \centering
        \begin{tikzpicture}[scale = .5, thick]
        %\draw [help lines] (0,0) grid (7,7);
        \draw [dashed] (2,2)--(4,2);
        \draw (4,2)--(4,4);
        \draw (4,4)--(2,4);
        \draw [dashed] (2,4)--(2,2);
        
        \draw (2,2)--(1,1);
        \draw (4,2)--(5,1);
        \draw (4,4)--(5,5);
        \draw (2,4)--(1,5);
        \draw (5,1)--(5,0);
        \draw (5,1)--(6,1);
        
        \draw (1,1) node[below]{$1$};
        \draw (5,5) node[above]{$4$};
        \draw (1,5) node[above]{$5$};
        \draw (5,0) node[below]{$2$};
        \draw (6,1) node[right]{$3$};
        
        \end{tikzpicture}
    \end{subfigure}
    \caption{The two possibilities for $N_1$ in Lemma \ref{lemma:sunletID}.}
    \label{fig:netL4C5}
\end{figure}

\begin{cor}
The semi-directed network parameter of large-cycle K2P and K3P network models is generically identifiable.
\end{cor}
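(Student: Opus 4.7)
The plan is to invoke the combinatorial reduction established by Gross and Long \cite{phyloNetId18} and reuse their case analysis essentially verbatim. Their framework shows that, for any group-based phylogenetic model $M$ whose tree parameter is generically identifiable, the generic identifiability of the large-cycle network parameter follows from exactly the three ingredients recorded as Lemmas \ref{lemma:cycleDim}, \ref{lemma:fourCyID}, and \ref{lemma:sunletID}. Thus the corollary is essentially a bookkeeping exercise once the three lemmas are in hand.

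First I would verify that every ingredient required by the Gross--Long reduction is available in both settings. For $M = K3P$, generic identifiability of the tree parameter for $n \geq 4$ is exactly Theorem \ref{thm:K3P_ID}, and the analogous statement for $M = K2P$ was established in \cite{mixJCID10}. Lemmas \ref{lemma:cycleDim}, \ref{lemma:fourCyID}, and \ref{lemma:sunletID} have just been proved uniformly for $M \in \{K2P, K3P\}$ via Algorithm \ref{alg:certID}. So every prerequisite of the Gross--Long argument is in place.

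Next I would run through the case analysis of \cite[Section 4.2]{phyloNetId18}. Given two distinct $n$-leaf large-cycle networks $N_1$ and $N_2$, the argument produces a subset $K \subseteq [n]$ such that the induced subnetworks $N_1|_K$ and $N_2|_K$ fall into one of the following configurations: (i) two distinct trees, handled by tree identifiability; (ii) networks of different cycle sizes in $\{2, 3, 4\}$, handled by Lemma \ref{lemma:cycleDim}; (iii) two distinct 4-leaf 4-cycle networks, handled by Lemma \ref{lemma:fourCyID}; or (iv) a 5-leaf 4-cycle network against the 5-leaf 5-cycle sunlet, handled by Lemma \ref{lemma:sunletID}. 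Each case yields $V_{N_1|_K}^M \not\subseteq V_{N_2|_K}^M$ or the symmetric containment failure, which lifts to $V_{N_1}^M \not\subseteq V_{N_2}^M$ by the restriction lemma analogous to \cite[Lemma 3]{mixJCID10}.

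The main (minor) obstacle is confirming that the combinatorial reduction of \cite{phyloNetId18} is genuinely model-agnostic, i.e., that it does not secretly invoke anything specific to the Jukes--Cantor parameterization. Inspecting their arguments, the only model-dependent inputs are precisely the three lemmas and tree identifiability; the rest relies on the purely structural fact that restricting the leaf set of a $k$-cycle network commutes with the Fourier parameterization and produces the naturally induced smaller network. Since this property is a general feature of group-based models, no new work is required, and the corollary follows.
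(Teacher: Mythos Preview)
Your proposal is correct and takes essentially the same approach as the paper: invoke the model-agnostic combinatorial reduction of Gross and Long \cite{phyloNetId18}, supply Lemmas \ref{lemma:cycleDim}--\ref{lemma:sunletID} as the only model-dependent inputs, and conclude mutual non-containment of the irreducible varieties $V_{N_1}^M$ and $V_{N_2}^M$. One small note: the tree identifiability needed in the Gross--Long reduction is for single trees, not 2-tree mixtures, so your citations of Theorem \ref{thm:K3P_ID} and \cite{mixJCID10} are stronger than necessary---the classical single-tree result already suffices.
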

\begin{proof}
Since Lemmas \ref{lemma:cycleDim}, \ref{lemma:fourCyID}, \ref{lemma:sunletID} hold for K2P and K3P cycle-networks, Lemmas 4.11, 4.12, and 4.13 of \cite{phyloNetId18} hold for K2P and K3P networks as well. This means for any two large-cycle networks $N_1$ and $N_2$, $V_{N_1} \not \subseteq V_{N_2}$ and 
$V_{N_2} \not \subseteq V_{N_1}$. Since these varieties are irreducible, this mutual non-containment implies
\[
\dim(V_{N_1}^M \cap V_{N_2}^M) < \min(\dim(V_{N_1}^M), \dim(V_{N_2}^M))
\]
and so the semi-directed network parameter of large-cycle K2P and K3P network models is generically identifiable. 
\end{proof}

\begin{rmk}
In our original computations we were also able to separate the 3-cycle networks from the 4-cycle networks for both the K2P and K3P models. It may be possible to extend these identifiability results to cycle networks with cycle size at least 3. As previously mentioned though, it will always be impossible for trees to be generically identifiable from cycle networks. 
\end{rmk}

This serves as another example of how Algorithm \ref{alg:certID} can be used to obtain identifiability results for discrete parameters in algebraic models. While this algorithm has nice computational advantages over computing vanishing ideals, there can be times when it fails to separate varieties whose intersection is actually lower dimensional. It is important to remember that when this algorithm fails to separate two models, it does not imply that the discrete parameter is not identifiable. The example below shows that even if we compute the entire matroid of both models, we still may not be able to separate models whose intersection is actually lower dimensional.

\begin{ex}
\label{ex:matroidIDFail}
Let $N_1$ and $N_2$ be the networks pictured on the left and right in Figure \ref{fig:netL4C4} respectively. We can directly compute the vanishing ideals $I_{N_1}$ and $I_{N_2}$ of the CFN network models on $N_1$ and $N_2$ via elimination and get
\begin{align*}
I_{N_1} &= \langle q_{0110}q_{1001} - q_{0101}q_{1010} + q_{0011}q_{1100} - q_{0000}q_{1111} \rangle \\
I_{N_2} &= \langle -q_{0110}q_{1001} + q_{0101}q_{1010} + q_{0011}q_{1100} - q_{0000}q_{1111} \rangle.
\end{align*}
These ideals are of the same dimension and not equal so the intersection of their corresponding varieties is lower dimensional. Despite that, we can compute their entire matroid explicitly and see that they are equal. This stems from the fact that the polynomials that generate $I_{N_1}$ and $I_{N_2}$ involve the same variables. 
\end{ex}

%%%%%%%%%%%%%%%%%%%%%%%%%%%%%%%%%%%%%%%%%%%%%%%%%%%%%%%%%%%%%
%%%%%%%%%%%%%%%%%%%%%%%%%%%%%%%%%%%%%%%%%%%%%%%%%%%%%%%%%%%%%
%%%%%%%%%%%%%%%%%%%%%%%%%%%%%%%%%%%%%%%%%%%%%%%%%%%%%%%%%%%%%
%%%%%%%%%%%%%%%%%%%%%%%%%%%%%%%%%%%%%%%%%%%%%%%%%%%%%%%%%%%%%

\section*{Acknowledgments}

Benjamin Hollering and Seth Sullivant were partially supported by the US National Science Foundation
(DMS 1615660).

\nocite{*}
\bibliography{phylo_id_and_alg_matroids.bib}{}
\bibliographystyle{plain}

\end{document}